\newtheorem{introthm}{Theorem}
\newtheorem{theorem}{Theorem}[section]
\newtheorem{lemma}[theorem]{Lemma}
\newtheorem{proposition}[theorem]{Proposition}
\newtheorem{corollary}[theorem]{Corollary}
\theoremstyle{definition}
\newtheorem{definition}[theorem]{Definition}
\newtheorem{example}[theorem]{Example}
\newtheorem{construction}[theorem]{Construction}
\newtheorem{remark}[theorem]{Remark}
\def\cc{{\mathbb C}}
\def\zz{{\mathbb Z}}
\def\rr{{\mathbb R}}
\def\nn{{\mathbb N}}
\def\qq{{\mathbb Q}}
\def\pp{{\mathbb P}}
\def\Osh{{\mathcal O}}
\def\R{{\mathcal R}}
\def\Sl{\operatorname{SL}}
\def\Hom{\operatorname{Hom}} 
\def\Spec{\operatorname{Spec}}
\def\Conv{\operatorname{Conv}} 
\def\Cl{\operatorname{Cl}} 
\def\Aut{\operatorname{Aut}} 
\def\F{{\mathcal F}}
\begin{document}
\title[Calabi-Yau 
hypersurfaces in $\qq$-Fano toric varieties]{Families of Calabi-Yau 
hypersurfaces\\ in $\qq$-Fano toric varieties}

\author{Michela Artebani}
\address{
Departamento de Matem\'atica, \newline
Universidad de Concepci\'on, \newline
Casilla 160-C,
Concepci\'on, Chile}
\email{martebani@udec.cl}

\author{Paola Comparin}
\address{
Departamento de Matem\'atica, \newline
Universidad de Concepci\'on, \newline
Casilla 160-C,
Concepci\'on, Chile}
\email{pcomparin@udec.cl} 

\author{Robin Guilbot}
\address{
Instituto Nacional de Matem\'atica Pura e Aplicada, \newline
Estrada Dona Castorina, 110 \newline
Rio de Janeiro, Brasil}
\email{rguilbot@math.cnrs.fr}

\subjclass[2010]{14M25, 14J32, 14J33}
\keywords{$\qq$-Fano toric varieties, Calabi-Yau hypersurfaces, canonical polytope, 
reflexive polytope, Berglund-H\"ubsch-Krawitz construction} 

\thanks{The first author has been partially 
supported by Proyecto Fondecyt Regular N. 1130572 and Proyecto Anillo ACT 1415 PIA
Conicyt.
The second author has been partially 
supported by Proyecto Fondecyt Postdoctorado N. 3150015 and Proyecto Anillo ACT
1415 PIA Conicyt. 
The third author was supported by the CNPq, 
Conselho Nacional de Desenvolvimento Cient\'{i}fico 
e Tecnol\'{o}gico, Brasil,  and by the NCN project 2013/08/A/ST1/00804.}

\begin{abstract} 
We provide a sufficient condition for a general 
hypersurface in a $\qq$-Fano toric variety  
to be a Calabi-Yau variety in terms of its Newton polytope.
Moreover, we define a generalization of the 
Berglund-H\"ubsch-Krawitz construction in case 
the ambient is a $\qq$-Fano toric variety with 
torsion free class group
and the defining polynomial is not necessarily of Delsarte type.
Finally, we introduce a duality between 
families of Calabi-Yau hypersurfaces 
which includes both Batyrev
and Berglund-H\"ubsch-Krawitz mirror constructions. 
This is given in terms of a polar duality 
between pairs of polytopes $\Delta_1\subseteq \Delta_2$,
where $\Delta_1$ and $\Delta_2^*$ 
are canonical.
 \end{abstract}

\maketitle

\section*{Introduction}

A wide class of Calabi-Yau varieties is given by anticanonical 
hypersurfaces, or more generally complete intersections, in Fano toric varieties. 
A special interest for such families of Calabi-Yau's arised 
after the work of Batyrev~\cite{Ba}, who defined a duality between 
the anticanonical linear series of toric Fano varieties 
which satisfies the requirement of topological mirror symmetry~\cites{Ba, BaBo}:
\begin{equation}\label{hodge}
h^{p,q}_{st}(X)=h^{n-p,q}_{st}(X^*),\quad 0\leq p,q\leq n,
\end{equation}
where $X,X^*$ are general elements in the dual linear series, 
$n=\dim(X)=\dim(X^*)$ and $h^{p,q}_{st}$ denote the string-theoretic Hodge numbers.
In the case of hypersurfaces, Batyrev mirror construction relies on the polar duality 
between reflexive polytopes.
A different class of Calabi-Yau varieties can be constructed by considering 
quotients of quasismooth (or transverse) hypersurfaces in weighted projective spaces, 
i.e. defined by homogeneous polynomials whose affine cone is singular only at the vertex.
For such Calabi-Yau varieties, in case they are defined by Delsarte type equations,
the physicists Berglund and H\"ubsch~\cite{BH} defined 
a transposition rule for the defining polynomial. 
The construction has been later refined by Krawitz \cite{K}, 
who introduced the action of finite diagonal symplectic groups. 
More precisely, the Berglund-H\"ubsch-Krawitz (BHK for short)
transposition rule is a correspondence
\[
\{W=0\}\subset \pp(w)/\tilde G \longleftrightarrow \{W^*=0\}\subset \pp(w^*)/\tilde G^*,
\]
where $W^*,w^*$ and $\tilde G^*$ are suitably defined transposed versions of the 
polynomial $W$, the set of weights $w$ and the group $\tilde G$  
(see Section \ref{bhk}).
Recently Chiodo and Ruan~\cite{CR} proved that the BHK 
transposition rule actually satisfies \eqref{hodge} in terms of the Chen-Ruan orbifold cohomology.

The motivation of this work is first to relate the two constructions 
of Calabi-Yau varieties described above and secondly 
to define a duality generalizing both Batyrev and BHK duality. 
A way of unifying the BHK duality with Batyrev-Borisov duality 
of reflexive Gorenstein cones has been proposed in \cite{Bor}, 
where the author develops the vertex algebras approach to mirror 
symmetry for the BHK construction and suggest 
a common framework for the latter and  Batyrev-Borisov duality (see \cite[\S 7]{Bor}). 
The present paper makes the previous framework more explicit 
in the hypersurface case in terms of polytopes 
and identifies the needed regularity condition 
to obtain a duality between families of Calabi-Yau varieties.

More precisely, given a $\qq$-Fano toric variety $X$ 
with canonical singularities, 
we consider families of anticanonical hypersurfaces of $X$ 
with fixed Newton polytope.
Called $\Theta$ the anticanonical polytope 
of $X$ and $\Delta$ the given Newton polytope,
one such family will be denoted by  $\mathcal F_{\Delta,\Theta^*}$.
We prove the following result,
where we recall that a canonical polytope is a lattice polytope whose 
unique lattice interior point is the origin.

\begin{introthm}\label{cy}
Let $X$ be a $\qq$-Fano toric variety with canonical singularities 
and let $\Delta\subset M_\qq$ be a  canonical polytope contained in the 
anticanonical polytope $\Theta$ of $X$.
Then a general hypersurface in $\mathcal F_{\Delta,\Theta^*}$ 
is a  Calabi-Yau variety.
\end{introthm}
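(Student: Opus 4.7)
I would begin with the observation that, because $\Delta \subseteq \Theta$, every Laurent polynomial $f$ with Newton polytope $\Delta$ defines a section of $-K_X$; hence a general member $Y = \{f=0\}$ of $\mathcal F_{\Delta,\Theta^*}$ is an anticanonical hypersurface in $X$, and by adjunction $\omega_Y \cong \Osh_Y$ (up to controlling what happens at the singular locus of $X$). The substance of the theorem therefore lies in controlling the singularities of $Y$: one must show that a general such hypersurface is irreducible and has at worst canonical singularities, so that the adjunction identity really produces a Calabi--Yau variety.

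The plan is to work on a simultaneous toric resolution. I would choose a projective simplicial refinement $\Sigma'$ of the fan of $X$ that inserts a ray through every lattice point of $\partial \Theta^*$ and that simultaneously refines the normal fan of $\Delta$, and let $\pi\colon X' \to X$ be the induced toric birational morphism. Since $\Theta^*$ is a canonical polytope (equivalent to $X$ having canonical singularities), the exceptional divisors of $\pi$ whose rays lie through lattice points of $\partial\Theta^*$ are crepant, so $\pi$ is a partial crepant resolution. Let $Y' \subset X'$ be the proper transform of $Y$.

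The second step is a Khovanskii-style non-degeneracy argument: for general $f$ the hypersurface $Y'$ meets every torus orbit of $X'$ transversally along its relative interior. This gives local normal forms for $Y'$ at every singular stratum of $X'$, from which the singularities of $Y'$ can be read off combinatorially. The canonicity of $\Delta$ enters exactly as in Batyrev's original argument: the origin, being the unique interior lattice point of $\Delta$, produces a nowhere-vanishing holomorphic top form on $Y'$ over the dense torus, while the absence of further interior lattice points prevents any divisorial component on $Y'$ from being a locus of negative discrepancy. Hence $Y'$ has canonical singularities with trivial canonical class, and since $\pi|_{Y'}\colon Y' \to Y$ is crepant, $Y$ inherits these properties.

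The main obstacle will be the second step: precisely checking that the canonical-polytope hypothesis on $\Delta$, rather than the stronger reflexivity assumption used by Batyrev, is already enough to rule out discrepancies less than zero for $Y'$. A clean way to organize this would be to isolate a local lemma asserting that, for a non-degenerate Laurent polynomial whose Newton polytope is canonical, the resulting hypersurface in the toric variety associated to its normal fan has only canonical singularities; the global statement then follows by gluing these local models along the charts of $X'$, using the refinement chosen above so that every chart sees $\Delta$ (or one of its faces) as the relevant local Newton polytope. A Bertini-type argument should then upgrade ``the non-degenerate locus is open and non-empty'' to the desired genericity statement in $\mathcal F_{\Delta,\Theta^*}$.
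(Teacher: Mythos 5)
Your outline circles the right ideas (discrepancy computations on a toric birational modification, canonicity of $\Delta$ as the source of non-negativity), but the step you yourself flag as the ``main obstacle'' is exactly where the argument is incomplete, and it cannot be closed by a Khovanskii-type transversality analysis alone. Outside the reflexive/quasismooth setting the general member of $\mathcal F_{\Delta,\Theta^*}$ is genuinely singular in codimension $\geq 2$ (the paper exhibits non-quasismooth examples in dimension $5$), so ``reading off the singularities of $Y'$ combinatorially from local normal forms'' has no ready-made criterion for canonicity. The paper avoids analyzing $Y'$ directly: it first proves that the general $D\in\mathcal F_{\Delta,\Theta^*}$ is irreducible, \emph{well-formed} and \emph{normal} (via explicit combinatorial criteria comparing which $n_i$ lie on facets of $\Theta^*$ versus $\Delta^*$), then computes the discrepancies of the \emph{pair} $(X,D)$ on a toric log resolution, obtaining $a(X,D,E)=-1-\min_{u\in\Delta\cap M}\langle u,n\rangle$, which is $\geq 0$ precisely because $\Delta^*$ has no nonzero interior lattice point (this is where canonicity of $\Delta$ enters, via the elementary Lemma on polars), and finally invokes a Koll\'ar-type inversion of adjunction, ${\rm discrep}(D)\geq{\rm discrep}({\rm center}\subset D, X,D)$, valid only under the normality and well-formedness hypotheses. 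That inequality is the missing idea in your proposal; without it, or some substitute, you have no bridge from ``the ambient pair has non-negative discrepancies'' to ``$Y$ itself has canonical singularities.''

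Two further points. First, your crepancy claim is keyed to the wrong polytope: whether an exceptional toric divisor restricts crepantly to $Y$ is governed by whether its ray generator lies on the boundary of $\Delta^*$, not of $\Theta^*$ (the paper's Corollary on crepant morphisms). Since $\Theta^*\subseteq\Delta^*$, a lattice point of $\partial\Theta^*$ could a priori sit in the interior of $\Delta^*$; it is only the canonicity of $\Delta$ (hence of $\Delta^*$) that rules this out, so your ``partial crepant resolution'' step silently uses the hypothesis you are trying to exploit later. Second, the definition of Calabi--Yau in force here also requires $h^i(Y,\Osh_Y)=0$ for $0<i<\dim Y$; your proposal stops at triviality of $\omega_Y$ and canonical singularities. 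This last piece is easy (the long exact sequence of $0\to\Osh_X(K_X)\to\Osh_X\to\Osh_D\to 0$ together with Serre--Grothendieck duality), but it should be said.
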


This result gives examples 
of families of Calabi-Yau varieties in dimension $\geq 5$ 
whose general element is not quasismooth 
and it is not birational to a hypersurface in a toric Fano variety
 (see Table \ref{tableqs} for some of them).

Theorem \ref{cy} also suggests the definition 
of a duality between families of Calabi-Yau varieties 
with fixed Newton polytope in $\qq$-Fano toric varieties.
We will say that a pair $(\Delta_1,\Delta_2)$ of 
polytopes is a {\em good pair} if $\Delta_1\subseteq \Delta_2$ 
and both $\Delta_1$ and $\Delta_2^*$ are canonical.
Clearly the polar $(\Delta_2^*,\Delta_1^*)$ of a good pair is 
still a good pair.
This involution on good pairs produces the duality 
\[
\mathcal F_{\Delta_1,\Delta_2^*}\subseteq |-K_{X_{\Delta_2}}| 
\longleftrightarrow \mathcal F_{\Delta_2^*,\Delta_1}\subseteq |-K_{X_{\Delta_1^*}}|. 
\]
This coincides with Batyrev duality when $\Delta_1=\Delta_2$, 
since canonical polytopes whose polar is canonical are exactly reflexive polytopes.

Moreover, Theorem \ref{cy} allows to define 
a {\em generalized  BHK transposition rule} 
where the weighted projective space $\pp(w)$  
is replaced by a $\qq$-Fano toric variety 
with torsion free class group and canonical singularities. 
More precisely, a generalized BHK family $\mathcal F(A,\tilde G)$ 
in $X/\tilde G$ is a family of hypersurfaces of anticanonical degree
defined by a matrix $A$ of exponents for the equations and a 
symplectic group $\tilde G$.
We can associate to it a pair of polytopes $\Delta_1\subseteq \Delta_2$, where 
 $\Delta_1$ is the Newton polytope of a 
general element in $\mathcal F(A,\tilde G)$ and 
$\Delta_2$ is the anticanonical polytope of $X/\tilde G$. 
The following shows that the generalized BHK 
transposition rule can be seen as a duality between good pairs.

\begin{introthm}\label{thmbhk}
Let $(\Delta_1,\Delta_2)$ be the pair associated to a generalized Berglund-H\"ubsch-Krawitz family 
$\mathcal F(A,\tilde G)$. Then   $(\Delta_1,\Delta_2)$ is a good pair and 
the pair associated to $\mathcal F(A^T,G^*)$ is $(\Delta_2^*,\Delta_1^*)$.
\end{introthm}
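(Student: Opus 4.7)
The plan is to express $\Delta_1$ and $\Delta_2$ directly in terms of the matrix $A$ and the group data, and then read off both the good-pair axioms and the transposition statement by comparing the original and transposed combinatorial setups. Unpacking the definition, a general member of $\mathcal F(A,\tilde G)$ is a hypersurface in $X/\tilde G$ defined by a polynomial whose monomials have exponent vectors equal to the rows of $A$ (translated so that the monomial at the center of the anticanonical polytope is the origin of $M$). Consequently $\Delta_1$ is the convex hull of the rows of $A$ in $M_\qq$, while $\Delta_2$ is by definition the anticanonical polytope of $X/\tilde G$. The columns of $A$, after primitivization in the lattice $N$ refined by $\tilde G$, are the primitive generators of the rays of the fan of $X/\tilde G$, hence the vertices of $\Delta_2^*$.

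For the good-pair axioms, the inclusion $\Delta_1\subseteq \Delta_2$ is automatic, since every monomial of the defining polynomial is of anticanonical degree. That $\Delta_2^*$ is canonical follows from $X/\tilde G$ having canonical singularities. The nontrivial claim that $\Delta_1$ itself is canonical I would obtain from the Delsarte-type nondegeneracy of $A$: the rows of $A$ are vertices of $\Delta_1$, and the unique interior lattice point must be the origin because any additional interior anticanonical monomial would provide a second $\tilde G$-invariant form of top degree, contradicting the role of $\tilde G$ as a symplectic group preserving a unique top-degree invariant. (Alternatively, one reduces to the known case of an invertible polynomial in $\mathbb P(w)$ via the toric quotient description of $X/\tilde G$.)

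For the transposition statement, the key observation is that the rows of $A^T$ are the columns of $A$, which we have identified as the vertices of $\Delta_2^*$; hence the Newton polytope associated to $\mathcal F(A^T,G^*)$ is exactly $\Delta_2^*$. Applying the symmetric statement with $(A,\tilde G)$ and $(A^T,G^*)$ interchanged, the anticanonical polytope of the transposed ambient has dual with vertices equal to the columns of $A^T$, namely the rows of $A$, so this anticanonical polytope is $\Delta_1^*$. The Krawitz duality $\tilde G\leftrightarrow G^*$ is built precisely to invert the sublattice refinements that $\tilde G$ induces on $M$ and on $N$, so that the primitive lattice structure on the $M$-side of the transposed family matches the $N$-side of the original, and vice versa.

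The principal obstacle will be the lattice bookkeeping: verifying that the primitive vertices of $\Delta_2^*$ are literally the columns of $A$ (and not merely parallel to them) in the lattice refined by $\tilde G$, and that Krawitz duality interchanges these refinements correctly, is the combinatorial heart of the argument. Once this normalization is set up, the good-pair condition and the identification of $\mathcal F(A^T,G^*)$ with the polar pair $(\Delta_2^*,\Delta_1^*)$ follow from the formal symmetry of the $A\leftrightarrow A^T$ transposition.
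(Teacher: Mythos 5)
Your overall strategy---identify $\Delta_1$ with the convex hull of the points $u_i$ encoded by the rows of $A$, identify the vertices of $\Delta_2^*$ with the $n_j$ encoded by the columns, and read the transposition off the $A\leftrightarrow A^T$ symmetry---is the same as the paper's. However, the step you call ``the nontrivial claim'' is justified by a spurious argument. There is no ``unique $\tilde G$-invariant form of top degree'': $\tilde G$ preserves every monomial of $W$, and an extra interior lattice point of $\Delta_1$ would just be one more anticanonical monomial, contradicting nothing; the alternative reduction to invertible polynomials in $\pp(w)$ also fails since $\Delta_1$ need not be a simplex and $X$ need not be a weighted projective space. The correct reason is purely combinatorial and much simpler (Lemma \ref{canon1} and Corollary \ref{canon2}): since $X$ is $\qq$-Fano its fan is the face fan of $\Delta_2^*$, so $\Delta_2^*$ is a lattice polytope with the origin in its interior; hence $\Delta_2=(\Delta_2^*)^*$ has no nonzero interior lattice point, and neither does the full-dimensional lattice subpolytope $\Delta_1$; dually $\Delta_2^*\subseteq\Delta_1^*$ and $\Delta_1^*$ has no nonzero interior lattice point because $\Delta_1$ is a lattice polytope with the origin in its interior.

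The second gap is that the ``lattice bookkeeping'' you defer is the actual content of the theorem, and your hedge ``after primitivization in the lattice refined by $\tilde G$'' points the wrong way: if the $n_j$ had to be primitivized in $N_G=M_G^\vee$, the anticanonical polytope of $X/\tilde G$ would differ from $\Delta_2$ and the Newton polytope of the transposed family would not be $\Delta_2^*$, so the theorem would fail as stated. What is needed is precisely Proposition \ref{prim} (via Lemma \ref{quot}): the $n_j$ \emph{remain} primitive in $N_G$ because no monomial of $W$ contains all the variables, which is what makes the Cox ring and anticanonical polytope of $X/\tilde G$ literally equal to those of $X$. Similarly, the claim that Krawitz duality ``inverts the sublattice refinements'' requires the perfect pairing of Lemma \ref{iso}, which identifies subgroups $\tilde G\subseteq \Sl(W)/J_\Sigma$ with intermediate lattices $M_W\subseteq M_G\subseteq M$, together with the definition \eqref{Gstar2} of $G^*$ via the dual tower $M^\vee\subseteq M_G^\vee\subseteq M_W^\vee$. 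Without these two ingredients, your argument for the second half of the statement restates what is to be proved rather than proving it.
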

The theorem relies on a toric description of 
the generalized BHK construction, which had been given 
in the classical case in \cite[\S 2]{Sh} and generalizes 
\cite[Proposition 2.9]{Sh}.

The paper is organized as follows. 
In Section \ref{background} we recall some definitions and basic results 
about toric varieties and polytopes. 
In Section \ref{sec-hyp} we study hypersurfaces in toric varieties and we describe 
their regularity properties according to the Newton polytope.
 In Section \ref{sec-duality} we prove Theorem \ref{cy} and we define the duality between good pairs. 
Section \ref{bhk} is devoted to the definition of the generalized BHK 
mirror construction and to the proof of Theorem \ref{thmbhk}. 
Finally Section \ref{hn} contains some remarks about the stringy Hodge numbers 
of our families.

\section{Toric background}\label{background}
\subsection{Toric varieties}
We start recalling some standard facts in toric geometry, see for example \cite{CLS}.    
Let $N$ denote a lattice and let $M={\rm Hom}(N,\zz)$ be its dual. 
Let $\Delta$ be a polytope in $M_{\qq}$, i.e. the convex hull of a finite subset of $M_\qq$. 
The {\em polar} of $\Delta$ is the polyhedron
\[
\Delta^*=\{y\in N_\qq: \langle x,y\rangle\geq -1,\ \forall\, x\in \Delta\},
\]
which clearly contains the origin 
in its interior and whose facets are contained 
in the affine hyperplanes of equation $\langle y,v_i\rangle=-1$, 
where $v_i$ is a vertex of $\Delta$.

It is well known that to any polytope $\Delta$ 
as above one can associate a toric variety 
$X=X_{\Delta}$ together with a $\qq$-divisor $D$.
The variety $X$ is the toric variety 
associated to the normal fan $\Sigma_{\Delta}$ to $\Delta$.
If $n_1,\dots,n_r$ are the primitive generators 
of the one-dimensional cones of $\Sigma_{\Delta}$ and $D_1,\dots,D_r$ 
are the corresponding integral torus-invariant divisors, 
then $D=-\sum_i h_{\Delta}(n_i)D_{i}$, where $h_{\Delta}$  
is the strictly upper convex function 
\[
h_{\Delta}:N_\qq\to \qq,\quad h_{\Delta}(y)=\min_{x\in \Delta}\{\langle x,y\rangle\}.
\]

Now let $P:\zz^r\to N$ be the homomorphism defined by $P(e_i)=n_i$,
which will be called $P$-{\em morphism} of the toric variety.
We denote by $P^T$ its transpose and by $Q$  the homomorphism 
defined by the following exact sequence:
\[
\xymatrix{
0\ar[r] & M\ar[r]^{P^T} &\zz^r\ar[r]^Q & K\ar[r] & 0,
}
\] 
where $K$ is isomorphic to the Class group of $X$.

The {\em Cox ring} $\mathcal R(X)$ is the polynomial ring $\cc[T_1,\dots,T_r]$, where 
$T_i$ is the defining element of the divisor $D_i$, graded by $K$:
$\deg(T_i)=Q(e_i)$. 
Let $\bar X=\Spec \mathcal R(X)\cong \cc^r$. 
 By Cox's construction~\cite[Theorem 5.1.11]{CLS} a toric variety $X$ 
associated to a fan $\Sigma$ can be described as a GIT-quotient  of a quasitorus
\[
X\cong \left( \bar X \setminus V(I) \right) \mathbin{/\! /} G_\Sigma,
\]
where $V(I)=V(x^{\hat{\sigma}} \mid \sigma \in \Sigma)$  
is the irrelevant locus, that is the subvariety of $\bar X$ 
defined by the vanishing of every monomial of the form 
$x^{\hat{\sigma}}= \prod_{\rho \notin \sigma(1)} x_\rho$ for $\sigma \in \Sigma$, and 
\[
G_\Sigma=\Hom(\Cl(X),\cc^*) \simeq \Spec(\cc[\Cl(X)]).
\] 
The big torus $\hat{T}=(\cc^*)^r$ naturally acts on the characteristic space 
$\hat{X}=\bar X\setminus V(I)$  by coordinatewise multiplication. 
Observe that $G_{\Sigma}$ is the kernel of the natural homomorphism
\[
\Psi:\hat T\to T,\quad (\lambda_1,\dots, \lambda_r)\mapsto (u\mapsto \prod_{j=1}^r\lambda_j^{\langle u,n_j\rangle}),
\]
where we identify $T$ with ${\rm Hom}(M,\cc^*)$.

We finally recall how quotients of toric varieties by finite subgroups of the torus can be described.
Let $X=X_{\Sigma,N}$ be a toric variety associated to a fan $\Sigma\subset N_{\qq}$
and let $N\to N'$ be a lattice monomorphism with finite cokernel 
$G$. The fan $\Sigma$ in $N'_{\qq}=N_\qq$ defines a toric variety $X'$ and
the inclusion of lattices induces a morphism of toric varieties 
$X\to X'$ which is a good geometric quotient by the action of the group $G$ 
\cite[Proposition 3.3.7]{CLS}.

\begin{lemma}\label{quot} Let $X=X_{\Sigma,N}$ be a toric variety associated to a 
fan $\Sigma\subset N_{\qq}$ with torsion free class group, 
let $\iota:N\to N'$ be a lattice monomorphism with finite cokernel $G$ 
and  $\pi:X\to X'$ be the associated finite quotient.
If the primitive generators $n_1,\dots,n_r\in N$  
of the rays of the fan of $X$ are primitive in $N'$,
then the homomorphism $\pi^*:\R(X')\to \R(X)$ 
can be taken to be the identity and   
$\Cl(X')\cong \Cl(X)\oplus G$.
\end{lemma}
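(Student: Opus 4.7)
The plan is to compare the Cox-ring exact sequences for $X$ and $X'$ through the commutative diagram induced by $\iota$. Since $\Sigma$ is the fan of both varieties (viewed in $N_\qq = N'_\qq$) and each $n_i$ is primitive in both lattices by hypothesis, the two $P$-morphisms $P\colon \zz^r\to N$ and $P'\colon \zz^r\to N'$ (both sending $e_i\mapsto n_i$) satisfy $P' = \iota\circ P$. In particular, the Cox rings $\R(X)$ and $\R(X')$ share the same underlying polynomial ring $\cc[T_1,\dots,T_r]$, differing only in the grading. Because $n_i$ also generates $N'\cap \qq n_i$, the quotient $\pi$ is unramified along each torus-invariant prime divisor, so $\pi^* D_i' = D_i$; at the level of Cox rings this realizes $\pi^*$ as the identity on $\cc[T_1,\dots,T_r]$.

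For the class group statement I would assemble the diagram
\[
\xymatrix{
0 \ar[r] & M' \ar[r]^{(P')^T} \ar[d]_{\iota^T} & \zz^r \ar[r]^{Q'} \ar@{=}[d] & \Cl(X') \ar[r] \ar[d] & 0 \\
0 \ar[r] & M \ar[r]^{P^T} & \zz^r \ar[r]^{Q} & \Cl(X) \ar[r] & 0
}
\]
whose left square commutes by $(P')^T = P^T \circ \iota^T$. Dualizing the sequence $0\to N\to N'\to G\to 0$ via $\Hom(-,\zz)$, and using $\Hom(G,\zz)=0$ together with $\operatorname{Ext}^1(G,\zz)\cong G$ for finite abelian $G$, shows that $\iota^T$ is injective with cokernel $G$. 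A snake-lemma chase (the middle vertical map being the identity) then produces the short exact sequence
\[
0\to G\to \Cl(X')\to \Cl(X)\to 0.
\]
Since $\Cl(X)$ is torsion-free by hypothesis, it is a free, hence projective, $\zz$-module, so this extension splits and we conclude $\Cl(X')\cong \Cl(X)\oplus G$.

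The only mildly delicate point I anticipate is the $\operatorname{Ext}$ identification $\coker(\iota^T)\cong G$; once that is in hand, both the snake-lemma step and the splitting by projectivity of $\Cl(X)$ are immediate. Verifying that $\pi^*$ is the identity on the Cox rings amounts to the standard observation that a toric quotient by a sublattice whose ray generators stay primitive is unramified in codimension one, so the canonical sections $T_i$ and $T_i'$ are identified under pullback.
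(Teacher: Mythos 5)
Your proof is correct and follows essentially the same route as the paper: the identity $P'=\iota\circ P$ from primitivity, the commutative diagram of Cox exact sequences with identity middle map, a diagram chase identifying the kernel of $\Cl(X')\to\Cl(X)$ with $\coker(\iota^T)\cong M/M'\cong G$, and the splitting from freeness of $\Cl(X)$. Your explicit $\operatorname{Ext}^1(G,\zz)\cong G$ step and the unramified-in-codimension-one remark for $\pi^*$ are just slightly more detailed versions of what the paper leaves implicit.
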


\begin{proof} 
Let $P,P'$ be the $P$-morphisms of $X,X'$ respectively.
By the primitivity assumption on the $n_i$'s in $N'$,
we have $P'=\iota\circ P$.
Thus we have the following commutative diagram with exact rows
\[
\xymatrix{
0\ar[r] & M'\ar[r]^{{P'}^T}\ar[d]^{\iota^{\vee}} & \zz^r\ar[r]\ar[d]^{id} & K'\ar[r]\ar[d]^{pr_1}& 0\\
0\ar[r] & M\ar[r]^{P^T} & \zz^r\ar[r] & K\ar[r]& 0\\
}
.\]
Observe that the central vertical arrow describes 
the map $\pi^*$ in Cox coordinates. 
Since $K$ is free, then $K'$ is isomorphic to $K\oplus \ker(pr_1)$.
Chasing in the diagram one finds that $\ker(pr_1)$ is isomorphic to 
$M/M'\cong G$.
\end{proof}

 \subsection{Polytopes}
We recall that a lattice polytope $\Delta$ in $M_{\qq}$ containing the origin in its interior is 
\begin{itemize}
\item {\em reflexive} if $\Delta^*$ is a lattice polytope;
\item {\em canonical} if the origin is its unique interior lattice point;
\item $\qq${-\em Fano} if its vertices are primitive in $M$.
\end{itemize}

A reflexive polytope can be defined equivalently as 
a lattice polytope with the origin in its interior such that 
the integral distance between any of its facets 
and the origin is equal to one (\cite[Theorem 4.1.6]{Ba}). 
Clearly with our definition $\Delta$ is reflexive 
if and only if $\Delta^*$ is reflexive.
A canonical polytope is clearly $\qq$-Fano 
since, given a non-primitive vertex $mv$, $m\in \zz_{>0}$, 
the vector $v$ would be a non zero interior lattice point.
Thus we have the following implications:
\[
\text{reflexive }  \Rightarrow  \text{ canonical }\Rightarrow\ \qq\text{-Fano}.
\]

\begin{remark} 
In \cite{Ka} Kasprzyk provided a classification of 
three dimensional canonical polytopes, 
available in the Graded Ring Database  
 \href{http://www.grdb.co.uk/}{http://www.grdb.co.uk/},
and described an approach to the classification 
in higher dimension.
Polytopes with the origin in their interior, 
sometimes called polytopes with the IP property in the literature, 
had a key role in the classification of reflexive polytopes of dimension $\leq 4$ 
 by Kreuzer and Skarke \cite{Kreuzer1998, Kreuzer2000, Kreuzer2002}.
\end{remark}

We recall that a projective normal variety $X$ 
is $\qq$-{\em Fano} if  $-K_X$ is $\qq$-Cartier and ample 
(in particular it is $\qq$-Gorenstein) and 
{\em Fano} if moreover $-K_X$ is Cartier (in particular it is Gorenstein).
Moreover, the following holds (see \cite[Theorem 6.2.1, Proposition 11.4.12, Theorem 8.3.4]{CLS}).

\begin{theorem}\label{qfano} Let $\Delta\subset N_\qq$ be a  lattice polytope containing 
the origin in its interior. Then $X_{\Delta^*}$  is
\begin{enumerate}[$\bullet$]
\item $\qq$-Fano  if and only if $\Delta$ is  $\qq$-Fano;
\item $\qq$-Fano with canonical singularities if and only if $\Delta$ is canonical;
\item Fano if and only if $\Delta$ (or $\Delta^*$) is reflexive.
\end{enumerate}
\end{theorem}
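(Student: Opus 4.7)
The plan is to translate each geometric condition on $X_{\Delta^*}$ into a combinatorial condition on $\Delta$ via the toric dictionary. The key observation is that, since $\Delta$ contains the origin in its interior, $\Delta^{**}=\Delta$ and the normal fan $\Sigma_{\Delta^*}$ coincides with the face fan of $\Delta$: its rays are the half-lines from $0$ through the vertices of $\Delta$. If $w_i$ denotes the primitive generator of the $i$-th ray and $v_i=m_iw_i\in\Delta$ the corresponding vertex (with $m_i\in\zz_{>0}$), then the rational polytope $\Delta^*\subset M_\qq$, containing $0$ in its interior, defines an ample $\qq$-Cartier divisor whose support function satisfies $h_{\Delta^*}(w_i)=-1/m_i$. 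The associated divisor is therefore $D_{\Delta^*}=\sum_i (1/m_i)D_i$.

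For the first bullet I would argue that $D_{\Delta^*}=-K_{X_{\Delta^*}}=\sum_iD_i$ exactly when every $m_i=1$, i.e.\ exactly when the vertices of $\Delta$ are primitive, which is the definition of $\Delta$ being $\qq$-Fano. Combined with the ampleness of $D_{\Delta^*}$, this yields both directions of the equivalence: if $\Delta$ is $\qq$-Fano then $-K_{X_{\Delta^*}}=D_{\Delta^*}$ is ample and $\qq$-Cartier; conversely if $X_{\Delta^*}$ is $\qq$-Fano, the polytope of the ample $\qq$-Cartier divisor $-K_{X_{\Delta^*}}$ is forced to be $\Delta^*$, giving $m_i=1$ for every $i$.

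For the second bullet I would invoke the standard toric criterion (\cite[Proposition 11.4.12]{CLS}) that a toric $\qq$-Fano variety has at worst canonical singularities iff the only lattice point in the interior of the convex hull of $\{0\}$ together with the primitive ray generators is the origin. Under the $\qq$-Fano assumption on $\Delta$ (already implied by $\Delta$ canonical, and available in the converse direction by the first bullet) this convex hull is precisely $\Delta$, so canonicity of the singularities of $X_{\Delta^*}$ is equivalent to canonicity of $\Delta$. For the third bullet, $X_{\Delta^*}$ is Fano iff $-K_{X_{\Delta^*}}$ is additionally Cartier; since this divisor corresponds to the polytope $\Delta^*$, Cartier-ness is equivalent, by \cite[Theorem 6.2.1]{CLS}, to $\Delta^*$ being a lattice polytope, i.e.\ to $\Delta$ being reflexive. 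This condition is manifestly symmetric under polar duality, hence holds for $\Delta$ iff it holds for $\Delta^*$.

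The main nuisance is really just bookkeeping the scaling factors $m_i$ in the first step: without the $\qq$-Fano hypothesis, $\Delta^*$ corresponds only to some ample $\qq$-divisor that is a positive rational multiple of $-K$ on each prime component, not to $-K$ itself, so the clean translations in the later bullets would lose their meaning. Once this identification is in place, the remaining two equivalences are direct quotations of the cited results in \cite{CLS}.
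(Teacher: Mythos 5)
The paper offers no argument for this theorem: it is stated as a direct citation of \cite{CLS} (Theorem 6.2.1, Proposition 11.4.12, Theorem 8.3.4). Your reconstruction via the toric dictionary --- identifying the normal fan of $\Delta^*$ with the face fan of $\Delta$, computing $h_{\Delta^*}(w_i)=-1/m_i$ and $D_{\Delta^*}=\sum_i(1/m_i)D_i$, and then quoting the same CLS results --- is exactly the intended route, and all three ``if'' directions (the only ones the paper ever uses) are handled correctly.

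There is, however, a genuine gap in your converse to the first bullet, and it propagates to the converses of the other two. You assert that if $X_{\Delta^*}$ is $\qq$-Fano then ``the polytope of the ample $\qq$-Cartier divisor $-K_{X_{\Delta^*}}$ is forced to be $\Delta^*$.'' It is not: the polytope of $-K_{X_{\Delta^*}}=\sum_i D_i$ is by definition $\{m\in M_\qq:\langle m,w_i\rangle\geq -1\ \forall i\}=\Conv(w_1,\dots,w_r)^*$, the polar of the convex hull of the \emph{primitive} ray generators, and ampleness of $-K$ only forces the face fan of $\Conv(w_1,\dots,w_r)$ to coincide with the face fan of $\Delta$; it does not force $m_i=1$. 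Concretely, take $\Delta=\Conv\bigl((2,0),(0,1),(-1,-1)\bigr)\subset N_\qq=\qq^2$: this is a lattice polytope with the origin in its interior, its face fan is the fan of $\pp^2$, so $X_{\Delta^*}\cong\pp^2$ is Fano (hence $\qq$-Fano with canonical singularities), yet $\Delta$ is neither $\qq$-Fano, nor canonical (the lattice point $(1,0)$ lies in its interior), nor reflexive. So the ``only if'' directions cannot be established as literally stated; they are honest equivalences only under the standing convention --- implicit in the paper and in the CLS references --- that $\Delta$ coincides with the convex hull of the primitive generators of the rays of its face fan, i.e.\ that $\Delta$ is a priori $\qq$-Fano (which makes the first bullet a tautology). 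Since your converses for bullets two and three invoke the converse of bullet one precisely to make that identification, you should either add this hypothesis explicitly or restrict to the ``if'' directions; with the hypothesis in place, your reductions to \cite{CLS}*{Proposition 11.4.12} and \cite{CLS}*{Theorem 6.2.1} go through as written.
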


We finally introduce the concept of good pair, a key word in the paper.

\begin{definition} Let $\Delta_1,\Delta_2$ be two polytopes in $M_\qq$.
We will say that $(\Delta_1,\Delta_2)$ is a {\em good pair} 
if $\Delta_1\subseteq \Delta_2$, and $\Delta_1, \Delta_2^*$ are canonical
(in particular $\Delta_1$ and $\Delta_2^*$ are both lattice polytopes). 
\end{definition}

The following shows that a lattice polytope containing the origin 
in its interior is canonical as soon as its dual is big enough.

\begin{lemma}\label{canon1}
Let $\Delta \subset M_\qq$ be a lattice polytope containing the origin as an interior point.
Then the origin is the only lattice interior point of $\Delta^*$.
\end{lemma}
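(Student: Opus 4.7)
The plan is to argue by contradiction, supposing that $y\in N$ is a nonzero lattice point in the interior of $\Delta^*$ and deriving a contradiction from the combined hypotheses that $\Delta$ is a \emph{lattice} polytope and that $0$ lies in its interior.

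First I would unpack the interior condition: $y\in\operatorname{int}(\Delta^*)$ means $\langle v,y\rangle>-1$ for every vertex $v$ of $\Delta$ (equivalently for every $x\in\Delta$, by convexity). Because $\Delta$ is a lattice polytope, each vertex $v$ lies in $M$, and since $y\in N$ the pairing $\langle v,y\rangle$ is an integer; a strict integer inequality $>-1$ upgrades to $\langle v,y\rangle\ge 0$. Taking convex combinations then gives $\langle x,y\rangle\ge 0$ for every $x\in\Delta$.

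Second, I would exploit the hypothesis $0\in\operatorname{int}(\Delta)$, which provides a small open neighborhood $U$ of the origin inside $\Delta$. Since $y$ is nonzero, the linear form $\langle\,\cdot\,,y\rangle$ takes strictly negative values in every neighborhood of $0$; choosing such a point in $U\subset\Delta$ contradicts the inequality obtained in the previous step. Hence $y=0$ is the only lattice point in $\operatorname{int}(\Delta^*)$.

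I do not foresee a genuine obstacle: the only mildly delicate point is noticing that the integrality of $\langle v,y\rangle$ depends on $\Delta$ being a \emph{lattice} polytope (vertices in $M$), which is precisely the hypothesis invoked; without it the argument would fail, reflecting the fact that the conclusion truly uses the lattice structure on $\Delta$ rather than merely on $\Delta^*$.
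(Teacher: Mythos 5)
Your proof is correct and follows essentially the same route as the paper's: use integrality of $\langle v, y\rangle$ for lattice vertices $v$ of $\Delta$ to upgrade the strict inequality $>-1$ to $\geq 0$, then contradict the hypothesis that the origin is interior to $\Delta$. The only cosmetic difference is that the paper obtains the strict inequality by perturbing the putative interior lattice point to $(1+\varepsilon)n\in\Delta^*$, whereas you read it off directly from the facet description of $\operatorname{int}(\Delta^*)$; both are fine.
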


\begin{proof}
By definition of the polar polytope it is clear that it contains the origin in its interior.
Suppose now that $\Delta^*$ contains a lattice point $n\neq 0$ in its interior. 
In particular for $\varepsilon > 0$ small enough $(1+\varepsilon) n$ is contained in $\Delta^*$. 
By definition of the polar polytope, this implies that   
$
\langle u, (1+\varepsilon)n\rangle \geq -1
$ 
for all $u \in \Delta$.
Thus every lattice point $u$ in $\Delta$ satisfies 
\[
\langle u, n\rangle \geq \dfrac{-1}{(1+\varepsilon)} > -1
\]
and hence
$\langle u, n\rangle \geq 0$ since $\langle u, n\rangle$ is an integer.
It follows that the lattice polytope $\Delta$ is 
contained in the half space $H_n=\lbrace u \in M_\rr \mid \langle u, n\rangle \geq 0 \rbrace$, 
contradicting the fact that $\Delta$ contains the origin as an interior point.
\end{proof}

As an immediate consequence we have:

\begin{corollary}\label{canon2}
Let $\Delta_1$ and $\Delta_2$ two polytopes in $M_\qq$ with $\Delta_1 \subseteq \Delta_2$. Then $(\Delta_1, \Delta_2)$ is a good pair if and only if $\Delta_1$ and $\Delta_2^*$ are lattice polytopes containing the origin as an interior point.
\end{corollary}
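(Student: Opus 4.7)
The forward direction is essentially tautological: by definition of a good pair, both $\Delta_1$ and $\Delta_2^*$ are canonical polytopes, so in particular they are lattice polytopes and each contains the origin as its (unique) interior lattice point, hence as an interior point.

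For the backward direction, I plan to apply Lemma \ref{canon1} twice, using the reverse inclusion $\Delta_2^* \subseteq \Delta_1^*$ induced by $\Delta_1 \subseteq \Delta_2$, together with the elementary fact that if $A \subseteq B$ are convex sets both containing a common interior point, then $\mathrm{int}(A) \subseteq \mathrm{int}(B)$. First, to show that $\Delta_2^*$ is canonical, I apply Lemma \ref{canon1} to $\Delta_1$: since $\Delta_1$ is a lattice polytope containing the origin as an interior point, the origin is the unique interior lattice point of $\Delta_1^*$; then, since $\Delta_2^* \subseteq \Delta_1^*$ and both contain the origin as an interior point, the origin is the only interior lattice point of $\Delta_2^*$ as well. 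Second, to show that $\Delta_1$ is canonical, I apply Lemma \ref{canon1} to $\Delta_2^*$: since $\Delta_2^*$ is a lattice polytope containing the origin as an interior point, the origin is the unique interior lattice point of $(\Delta_2^*)^* = \Delta_2$ (using biduality for convex polytopes containing the origin in their interior); then, since $\Delta_1 \subseteq \Delta_2$ and both contain the origin as an interior point, the origin is the only interior lattice point of $\Delta_1$.

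There is no real obstacle here; the statement is a clean consequence of Lemma \ref{canon1} together with the order-reversing behavior of polar duality and the monotonicity of interiors under inclusion of polytopes that share an interior point. The only small care needed is to verify that $\Delta_2$ indeed contains the origin as an interior point (which follows from $\Delta_1 \subseteq \Delta_2$), so that the identification $(\Delta_2^*)^* = \Delta_2$ is legitimate when invoking Lemma \ref{canon1} applied to $\Delta_2^*$.
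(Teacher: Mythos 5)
Your proof is correct and follows the same route the paper intends: the corollary is stated there as an immediate consequence of Lemma \ref{canon1}, obtained exactly by applying that lemma to $\Delta_1$ and to $\Delta_2^*$ and using the inclusions $\Delta_2^*\subseteq\Delta_1^*$ and $\Delta_1\subseteq\Delta_2$ (with biduality $(\Delta_2^*)^*=\Delta_2$). Your care about checking that $\Delta_2$ contains the origin in its interior is appropriate but routine.
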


This also implies the following result,
explaining how good pairs behave 
when changing the underlying lattice up to finite index.

\begin{lemma}\label{gp}
Let $(\Delta_1,\Delta_2)$ be a good pair in $M_\qq$ and let 
$M'\subset M$ be an inclusion of lattices with finite index. 
If the vertices of $\Delta_1$ belong to $M'$,
then $(\Delta_1,\Delta_2)$ is a good pair in $M'_{\qq}$.
\end{lemma}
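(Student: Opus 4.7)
The plan is to invoke Corollary \ref{canon2} for the lattice $M'$, which reduces verification of the good pair property to showing that $\Delta_1$ and $\Delta_2^*$ are lattice polytopes in $M'$ and $N' := \Hom(M',\zz)$ respectively, each containing the origin in its interior. This is a much cleaner route than checking canonicity directly, since passing to the finer dual lattice $N'\supseteq N$ could in principle create new interior lattice points of $\Delta_2^*$; Corollary \ref{canon2}, resting on Lemma \ref{canon1}, spares us from re-examining the ``unique interior lattice point'' condition after a lattice change.

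First I would record that the finite index inclusion $M'\subseteq M$ gives $M'_\qq=M_\qq$ and, dually, $N\subseteq N'$ of finite index with $N_\qq=N'_\qq$. Consequently, $\Delta_1,\Delta_2,\Delta_2^*$ are the same subsets of the same ambient rational vector spaces whether considered over $M,N$ or over $M',N'$. In particular, the inclusion $\Delta_1\subseteq\Delta_2$ is unchanged, and ``the origin lies in the interior'' of each of $\Delta_1$ and $\Delta_2^*$ is a purely topological statement that transfers automatically.

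Next I would verify the lattice polytope conditions in the new lattices. The vertices of $\Delta_1$ belong to $M'$ by hypothesis, so $\Delta_1$ is a lattice polytope in $M'$. The vertices of $\Delta_2^*$ belong to $N$ because $(\Delta_1,\Delta_2)$ is a good pair in $M_\qq$, and the inclusion $N\subseteq N'$ places them in $N'$ as well, so $\Delta_2^*$ is a lattice polytope in $N'$. Applying Corollary \ref{canon2} in the lattice $M'$ then yields that $(\Delta_1,\Delta_2)$ is a good pair in $M'_\qq$.

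I do not expect any real obstacle. The only subtlety worth naming is the asymmetry between the two lattices: canonicity of $\Delta_1$ in $M'$ is essentially automatic since $M'$ has fewer lattice points than $M$, whereas canonicity of $\Delta_2^*$ in the richer lattice $N'$ is \emph{not} automatic and is exactly what Corollary \ref{canon2} delivers from the much weaker condition that the origin sit in the interior of the lattice polytope $\Delta_2^*$.
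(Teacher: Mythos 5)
Your proof is correct and follows exactly the route the paper intends: the lemma is stated right after Corollary \ref{canon2} as a consequence of it, and your reduction to checking that $\Delta_1$ and $\Delta_2^*$ are lattice polytopes in $M'$ and $N'$ containing the origin in their interiors is precisely that argument. Your closing remark correctly identifies the one nontrivial point (the enlarged dual lattice $N'\supseteq N$ could a priori introduce new interior lattice points of $\Delta_2^*$, and Corollary \ref{canon2} is what rules this out), so nothing is missing.
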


\section{Anticanonical hypersurfaces}\label{sec-hyp}
Let $X$ be a projective toric variety defined by a fan $\Sigma\subset N_\qq$ 
and let $n_1,\dots, n_r\in N$ be
the primitive generators of the one dimensional cones of  $\Sigma$.
The {\em anticanonical polytope} of $X$ is the polytope
\[
\Theta=\{m\in M_\qq: \langle m,n_i\rangle\geq -1,\ \forall\, i\}.
\]
The lattice points of $\Theta$ naturally give a basis 
for the Riemann-Roch space of the divisor $-K_X=\sum_iD_i$. 
In fact, given $u\in \Theta \cap M$, 
the vector $P^T(u)+{\bf 1}\in \zz^r$, 
where ${\bf 1}$ is the vector with all entries equal to $1$,
is the vector of  exponents of a monomial $m_u$ 
in the Cox ring $\mathcal R(X)$ of degree $[-K_X]$.
Conversely, any such monomial can be obtained in the same way.

Given a hypersurface $D$ of $X$  of degree $[-K_X]$ 
defined by $f=0$ in Cox coordinates,
we define the {\em support} ${\rm supp}(f)$ 
to be the set of $u\in M$ such that $m_u$ is a 
monomial of $f$.
Moreover we define the {\em Newton polytope} of 
 $D$ as the convex hull of the points in its support.
Given a lattice polytope $\Delta$ contained in $\Theta$ 
we will denote by $\mathcal F_{\Delta,\Theta^*}$ the family of anticanonical 
hypersurfaces of $X$ whose Newton polytope is equal to $\Delta$.

\subsection{Regularity of hypersurfaces}
In this section we will translate some basic regularity 
properties of hypersurfaces in $\mathcal F_{\Delta,\Theta^*}$ 
in terms of geometric properties of $\Delta$. 
 We recall that a hypersurface $D$ of a projective toric variety $X$ is called
{\em well-formed} if 
\[
{\rm codim}_D(D\cap {\rm Sing}(X))\geq 2,
\]
 where ${\rm Sing}(X)$  is 
the singular locus of $X$.

\begin{example}
In case $X$ is a normalized weighted projective space, i.e.  
$X=\pp(w_1,\dots,w_n)$ with ${\rm gcd}(w_1,\dots,\hat w_i,\dots, w_n)=1$,
it is known \cite{F} that the general anticanonical hypersurface 
is well-formed if and only if 
\[
{\rm gcd}(w_1,\dots,\hat w_i,\dots, \hat w_j,\dots w_n)| \sum_kw_k.
\]
\end{example}

We will need the following result, where 
$D_I:=\cap_{i\in I} D_i$ for $I\subseteq \{1,\dots, r\}$.

\begin{lemma}\label{hyp} Let  $X$ be a $\qq$-Fano toric variety 
with canonical singularities and let $\Delta$ be a lattice polytope 
contained in its anticanonical polytope.
Then $D_I$ is not empty if and only if $\{n_i: i\in I\}$ 
is contained in a facet of $\Theta^*$ and 
a hypersurface in $\mathcal F_{\Delta,\Theta^*}$ contains $D_I$
if and only if  $\{n_i: i\in I\}$ is not contained in a facet of $\Delta^*$. 
\end{lemma}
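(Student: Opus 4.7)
The plan is to translate both conditions into the language of the normal fan of $\Theta$ (equivalently, the face fan of $\Theta^*$) and then exploit the description of sections of $-K_X$ via lattice points of $\Theta$ in Cox coordinates. Since $X$ is $\qq$-Fano with canonical singularities, Theorem \ref{qfano} tells me that $\Theta$ is canonical and $X=X_{\Theta^*}$, so the cones of the fan of $X$ are exactly the cones over the faces of $\Theta^*$. I would carry out the two halves of the lemma separately.

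For the first half I would recall the standard toric fact that $D_I=\bigcap_{i\in I}D_i$ is nonempty if and only if $\{n_i:i\in I\}$ generates a cone of the fan $\Sigma$. Using the face-fan description of $\Sigma$, such a cone exists exactly when $\{n_i:i\in I\}$ is contained in a common face of $\Theta^*$, and since every face lies in some facet this is equivalent to being contained in a facet. This half is essentially bookkeeping.

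For the second half I would work in Cox coordinates. The monomial associated to $u\in\Theta\cap M$ is $m_u=\prod_j x_j^{\langle u,n_j\rangle+1}$, so $m_u$ restricts to a nonzero function on $D_I$ (given by $x_i=0$ for $i\in I$) precisely when $\langle u,n_i\rangle=-1$ for every $i\in I$. Writing $F_I=\{u\in\Theta:\langle u,n_i\rangle=-1\text{ for all }i\in I\}$, this says $u\in F_I$. Hence $\{f=0\}\supseteq D_I$ if and only if $\mathrm{supp}(f)\cap F_I=\emptyset$. The key observation I would then use is that $F_I$ is a supporting face of $\Theta$ and therefore, since $\Delta\subseteq\Theta$, the intersection $F_I\cap\Delta$ is a face of $\Delta$; since $\mathrm{supp}(f)$ always contains every vertex of $\Delta$ (because $\mathrm{Conv}(\mathrm{supp}(f))=\Delta$), the condition $\mathrm{supp}(f)\cap F_I=\emptyset$ is equivalent to $F_I\cap\Delta=\emptyset$, i.e.\ no vertex $v$ of $\Delta$ satisfies $\langle v,n_i\rangle=-1$ for all $i\in I$.

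Finally I would translate this into a statement about $\Delta^*$. Since $0$ lies in the interior of $\Delta$, the facets of $\Delta^*$ are exactly the sets $\{y\in\Delta^*:\langle v,y\rangle=-1\}$ as $v$ ranges over vertices of $\Delta$, so the existence of a vertex $v$ with $\langle v,n_i\rangle=-1$ for all $i\in I$ is the same as $\{n_i:i\in I\}$ lying in some facet of $\Delta^*$. The main subtle point, which I would emphasize, is the passage from ``no lattice point of $\mathrm{supp}(f)$ is in $F_I$'' to ``no vertex of $\Delta$ is in $F_I$'': this could fail if $F_I\cap\Delta$ were a nonempty face that nevertheless contained no vertex of $\Delta$, but since a nonempty face of a polytope always has a vertex and vertices of faces of $\Delta$ are vertices of $\Delta$, the argument goes through uniformly for every member of $\mathcal F_{\Delta,\Theta^*}$.
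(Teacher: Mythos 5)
Your proof is correct and follows essentially the same route as the paper: the orbit--cone correspondence together with the face-fan description of $\Sigma$ (cones over the faces of $\Theta^*$) for the first claim, and the divisor $\mathrm{div}(m_u)=\sum_i(\langle u,n_i\rangle+1)D_i$ in Cox coordinates for the second, with the reduction from ``no point of $\mathrm{supp}(f)$ lies in $F_I$'' to ``no vertex of $\Delta$ lies in $F_I$'' handled via the face $F_I\cap\Delta$ --- a converse direction the paper leaves implicit and which you rightly make explicit. The only slip is in your statement of the ``standard toric fact'': since the face fan of $\Theta^*$ need not be simplicial, the correct criterion is that $\{n_i: i\in I\}$ be \emph{contained in} a common cone of $\Sigma$, not that it \emph{generate} one; your subsequent equivalence with containment in a common face (hence facet) of $\Theta^*$ is the right condition, so the argument stands as written.
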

\begin{proof}
Let  $\Theta$ be the anticanonical polytope of $X$.
By the assumption on $X$, a fan $\Sigma$ 
for $X$ is given by the cones over the facets of $\Theta^*$ 
and the $n_i$ are the vertices of $\Theta^*$.
The stratum $D_I$ is not empty if and only if the 
set $\{n_i: i\in I\}$ is contained in a cone of $\Sigma$,
or equivalently if the $n_i$ are  contained in a facet of $\Theta^*$.
This gives the first statement.

Let $u\in \Delta\cap M$, 
and $m_u$ be the corresponding monomial in homogeneous 
coordinates.
The zero set of the monomial $m_u$ is given by
\[
{\rm div}(m_u)=\sum_{i=1}^r(\langle u,n_i\rangle +1)D_i.
\] 
A hypersurface $f=0$ in $\mathcal F_{\Delta,\Theta^*}$
contains $D_I$ if and only if 
any monomial $m_u$ in $f$ vanishes 
along some of the $D_i$'s with $i\in I$.
This means that for all $u\in {\rm supp}(f)$ 
there exists some $i\in I$ with 
$\langle u, n_i\rangle > -1$.
In particular this holds for the vertices of 
$\Delta$, which means that the $n_i$'s do not 
all belong to a single facet of $\Delta^*$.
\end{proof}

\begin{remark}\label{genhyp} In the following results we will usually ask 
$D$ to be a {\em general element} in $\mathcal F_{\Delta,\Theta^*}$.
This means that $D=\{f=0\}$, where the Newton polytope of $D$  
is $\Delta$ and the coefficients of $f$ are general. 
Observe that the support of $f$ is not necessarily equal to $\Delta\cap M$.
\end{remark}

\begin{proposition}\label{norm}
Let $X$ be a $\qq$-Fano toric variety with canonical singularities 
and let $\Delta\subset M_\qq$ be a lattice polytope contained in the 
anticanonical polytope $\Theta$ of $X$.
A general hypersurface in $\mathcal F_{\Delta,\Theta^*}$ is: 
\begin{enumerate}[i)]
\item irreducible if and only if $n_i$ belongs to the boundary of $\Delta^*$ for any $i$;
\item well-formed  if and only if, anytime $n_i,n_j$ belong to a facet of $\Theta^*$
and not to a facet of $\Delta^*$, the segment joining them doesn't contain any lattice point;
\item normal if, anytime $n_i,n_j$ belong to a facet of $\Theta^*$
and not to a facet of $\Delta^*$,  $n_i+n_j$ is not in the interior of $\Delta^*$.
\end{enumerate}
\end{proposition}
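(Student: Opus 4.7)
The plan is to reduce each of (i), (ii), (iii) to a combinatorial condition on $\Delta$ by means of the divisor formula
\[
\operatorname{div}(m_u)=\sum_{i=1}^r(\langle u,n_i\rangle+1)D_i
\]
combined with Lemma~\ref{hyp}. For each lattice point $u\in\Delta$, the vanishing order of $m_u$ along $D_i$ is $\langle u,n_i\rangle+1$, and the position of $n_i$ relative to $\Delta^*$ controls the common vanishings of the monomials $m_u$ appearing in a general $f\in\mathcal F_{\Delta,\Theta^*}$.

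For (i), if some $n_i$ lies in the interior of $\Delta^*$ then $\langle u,n_i\rangle\geq 0$ for every $u\in\Delta\cap M$ by integrality, so $T_i$ divides every monomial of $f$; hence $D\supset D_i$ and $D$ is reducible. Conversely, if every $n_i$ belongs to $\partial\Delta^*$ then for each $i$ some vertex $u$ of $\Delta$ satisfies $\langle u,n_i\rangle=-1$, so no $T_i$ divides $f$ and $D$ has no toric component. Irreducibility of a general $D$ then follows from the standard genericity result that a Laurent polynomial with a prescribed full-dimensional Newton polytope is irreducible for a generic choice of coefficients.

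For (ii), toric varieties are smooth in codimension one, so $\operatorname{Sing}(X)$ is a union of closed toric strata of codimension $\geq 2$; well-formedness fails precisely when some singular codimension-two stratum $D_{ij}$ is entirely contained in $D$. By Lemma~\ref{hyp}, $D_{ij}$ is non-empty iff $\{n_i,n_j\}$ lies in a facet of $\Theta^*$, and $D\supset D_{ij}$ iff $\{n_i,n_j\}$ is not contained in any facet of $\Delta^*$. The remaining and most delicate step is to identify the non-singularity of the cone $\operatorname{cone}(n_i,n_j)$ at a generic point of $D_{ij}$ with the combinatorial condition that the segment $[n_i,n_j]$ contains no additional lattice point, using the canonicality of $\Theta^*$ to pin down the behaviour of pairs of its vertices.

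Finally, for (iii), Serre's criterion together with the Cohen--Macaulayness of the hypersurface $D$ reduces normality to regularity in codimension one. A Bertini-type argument yields smoothness of $D$ on the torus and at general points of any $D_i$ not contained in $D$, so only the strata $D_{ij}\subset D$ require a direct inspection. At a general point of such $D_{ij}$ one chooses local coordinates $x_i,x_j$ transverse to $D_{ij}$ and expands $f=a_{1,0}(y)\,x_i+a_{0,1}(y)\,x_j+O(x^2)$; smoothness of $D$ there amounts to $(a_{1,0},a_{0,1})\neq(0,0)$, which for a general $f$ requires the existence of a lattice point $u\in\Delta\cap M$ with $\langle u,n_i\rangle+\langle u,n_j\rangle=-1$. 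Since $\{n_i,n_j\}$ is not contained in any facet of $\Delta^*$, convexity forces $n_i+n_j\in\Delta^*$, and the assumption $n_i+n_j\notin\operatorname{int}(\Delta^*)$ then places this point on $\partial\Delta^*$, producing the required $u$ as a vertex of $\Delta$ by integrality; genericity of the coefficients yields the required non-vanishing.
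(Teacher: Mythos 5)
Your part (i) is essentially the paper's argument and is fine. The problems are in (ii) and (iii). In (ii) you correctly reduce, via Lemma~\ref{hyp}, to deciding when the two-dimensional cone on $n_i,n_j$ is singular, but you then announce ``the most delicate step'' without carrying it out: one must show that a lattice point of the triangle $\Conv(0,n_i,n_j)$ other than its vertices necessarily lies on the open segment joining $n_i$ and $n_j$. This is exactly where canonicity of $\Theta^*$ enters: such a point $an_i+bn_j$ with $a,b>0$, $a+b<1$ is a convex combination of $n_i,n_j$ and the interior point $0$ of $\Theta^*$ with positive weight on $0$, hence would be a nonzero interior lattice point of $\Theta^*$, which is excluded; points on $(0,n_i)$ are excluded by primitivity. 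Without this argument the equivalence between ``singular cone'' and ``lattice point on the segment'' is not established, and that equivalence is the actual content of (ii).

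In (iii) the approach as written would fail. You expand $f$ in ``local coordinates $x_i,x_j$ transverse to $D_{ij}$'' on $X$, but the whole point is that $X$ may be singular along $D_{ij}$ (that is what (ii) measures), so no such coordinates exist there; and you invoke Cohen--Macaulayness of $D$ to get $S_2$, but $-K_X$ is only $\qq$-Cartier, so $D$ is not locally cut out by a single equation and its $S_2$ property is not automatic. The paper's proof avoids both issues by working on the characteristic space $\hat X\subset\cc^r$: there $\hat D=p^{-1}(D)$ is a genuine hypersurface in a smooth variety, so Serre's criterion applies and the partial-derivative computation in the Cox coordinates is legitimate; normality is then descended to $D$ along the good quotient $\hat D\to D$ by taking invariants. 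Your final combinatorial step --- that the facet condition forces $n_i+n_j\in\Delta^*$, so $n_i+n_j\notin\operatorname{int}(\Delta^*)$ places it on a facet and yields a vertex $u$ of $\Delta$ with $\langle u,n_i+n_j\rangle=-1$, hence $\{\langle u,n_i\rangle,\langle u,n_j\rangle\}=\{0,-1\}$ and a surviving linear term --- is correct and in fact slightly more explicit than the paper, but it must be run on $\hat D$ and supplemented by the descent of normality to $D$.
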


\begin{proof}
Since $X$ is $\qq$-Fano with canonical singularities, 
the $n_i$'s are the vertices of $\Theta^*$ 
and the origin is the only interior lattice point of both $\Theta$ and $\Theta^*$.
In what follows $D$ denotes a  general element in $\mathcal F_{\Delta,\Theta^*}$.

By the second Bertini's theorem $D$ is reducible if and only if 
it contains one of the integral invariant 
divisors $D_i$ for the torus action as a component.
Thus i) follows from Lemma \ref{hyp}.

By the same Lemma, $D$ contains  
the stratum $D_{ij}$ if and only if 
$n_i,n_j$ are contained in a facet of 
$\Theta^*$ and not in a facet of $\Delta^*$.
Moreover, $X$ is singular along $D_{ij}$ if and only 
if the triangle $0,n_i,n_j$ contains a lattice point $n$
outside its vertices. 
Since the only interior lattice point of $\Theta^*$ 
is the origin, this means that  $n$ belongs to the 
segment between $n_i,n_j$.
This gives ii).

Let $p:\hat X\to X$ be the characteristic space of $X$,  
let $\hat D=p^{-1}(D)$ and let $\hat D_i=p^{-1}(D_i)$.
By Serre's criterion \cite[Proposition 8.23, Ch.II]{Ha} 
$\hat D$ is normal if and only 
if it is smooth in codimension one.
By the first Bertini's theorem this happens if and only if 
$\hat D_{ij}:=\hat D_i\cap \hat D_j$ 
is not contained in the singular locus of $\hat D$,
whenever it is not empty. 
By Lemma \ref{hyp}  $\hat D_{ij}$ is not empty 
and it is contained in $\hat D$
when $n_i,n_j$ belong to the same facet of $\Theta^*$  
but not to a facet of $\Delta^*$.
Under these conditions, $\hat D$ 
is singular along $\hat D_{ij}$ if and only if, 
for any $u\in {\rm supp}(f)$,
$(u,n_j)>-1$ whenever $(u, n_i)=0$, and similarly 
changing the role of $i$ and $j$ (this 
is equivalent to ask that the partial derivatives of the equation of $\hat D$ 
in homogeneous coordinates vanish along $\hat D_{ij}$).
Since there exists no $u\in \Delta\cap M$ 
such that $(u,n_i)=(u,n_j)=-1$, 
this is equivalent to ask that 
$(u, n_i+n_j)>-1$ for all $u\in \Delta\cap M$, i.e. 
that $n_i+n_j$ belongs to the interior of $\Delta^*$.

We recall that $p$ is a GIT quotient for the action of 
a quasi-torus $T$. 
The divisor $\hat D$ is $T$-invariant, being defined by 
a homogeneous polynomial in $\mathcal R(X)$.
This implies that $p|_{\hat D}: \hat D\to D$ is still a GIT quotient 
for the action of the group $T/T_0$, where $T_0$ 
is the subgroup of $T$ acting trivially on $\hat D$.
Since $\hat D$ is normal, it follows that $D$ is normal 
(see for example~\cite[Lemma 5.0.4]{CLS}). 
This proves iii).
\end{proof}

\subsection{Hypersurfaces with canonical singularities}
 
Let $X$ be a $\qq$-Gorenstein 
normal variety over $\cc$ 
of dimension $\geq 2$ 
and let $D$ be a $\qq$-Cartier divisor on $X$. 
Given a resolution $f:\tilde X\to X$,
that is a proper birational 
morphism such that $\tilde X$ is smooth,
one can write
\[
(K_{\tilde X}+f_*^{-1}D)-f^*(K_X+D)\equiv \sum_{i=1}^r a(X,D,E_i)E_i,
\]
where $E_1,\dots E_r$ are the distinct irreducible 
components of the exceptional divisor of $f$ and 
$f_*^{-1}D$ denotes the proper birational transform of $D$ 
(see \cite[Remark 6.6]{KSC} for the precise meaning of this equation). 
 The {\em discrepancy of } the pair $(X,D)$, denoted by ${\rm discrep}(X,D)$, 
  is the infimum of the values $a(X,D,E)$, 
  as $E$ varies over all exceptional divisors of the resolutions of $X$.
 The pair $(X,D)$ is {\em canonical} if ${\rm discrep}(X,D)\geq 0$ and
 $X$ has {\em canonical singularities} if $(X,0)$ is canonical.

In order to compute the discrepancy of a pair $(X,D)$ 
it is enough to consider the minimum over the values
$a(X,D,E)$ as $E$ varies among the exceptional divisors 
of a given {\em log resolution} of $(X,D)$,
i.e. a resolution  such that  
${\rm Exc}(f)+f_*^{-1}(D)$ has pure codimension 1 and is 
a divisor with simple normal crossings \cite[Definition 6.21]{KSC}.
Such a resolution always exists by a theorem of Hironaka \cite[Theorem 10.45]{KK}.
We recall the following result, which relates the discrepancy 
of a pair $(X,D)$ to the discrepancy of $D$.

\begin{theorem}[\cite{K92}]\label{kollar}
Let $X$ be a normal variety over $\cc$ 
and let $D$ be a normal divisor on $X$ 
such that $K_X+D$ is $\qq$-Cartier and 
${\rm codim}_D({\rm Sing}(X)\cap D)\geq 2$. 
Then 
\begin{equation}\label{discr}
{\rm discrep}(D)\geq {\rm discrep}({\rm center}\subset D, X,D),
\end{equation}
where the right hand side is the infimum of the values
 $a(X,D,E)$, where $f(E)\subset Z$.
In particular $D$ has canonical singularities if the pair $(X,D)$ 
is canonical.
\end{theorem}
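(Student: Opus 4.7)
The plan is to compare the two discrepancies by pulling back the adjunction formula along a common log resolution. First I would choose a log resolution $f\colon \tilde X\to X$ of the pair $(X,D)$, so that $\tilde X$ is smooth, the proper transform $\tilde D:=f_*^{-1}D$ is smooth, and $\tilde D+\sum_i E_i$ has simple normal crossings, where $E_1,\dots,E_r$ are the prime $f$-exceptional divisors and $a_i:=a(X,D,E_i)$. By definition of discrepancy,
\[
K_{\tilde X}+\tilde D=f^*(K_X+D)+\sum_i a_i E_i.
\]
Restricting this equality to $\tilde D$ and invoking adjunction on the smooth ambient $\tilde X$ yields
\[
K_{\tilde D}=g^*\bigl((K_X+D)|_D\bigr)+\sum_i a_i\bigl(E_i|_{\tilde D}\bigr),
\]
where $g:=f|_{\tilde D}\colon \tilde D\to D$ is a birational morphism from the smooth $\tilde D$, hence a resolution of $D$.

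The crucial reduction is to replace $(K_X+D)|_D$ with $K_D$. For a normal pair with $K_X+D$ a $\qq$-Cartier divisor, the different formula reads $(K_X+D)|_D=K_D+{\rm Diff}_D(0)$, with ${\rm Diff}_D(0)$ an effective $\qq$-divisor on $D$; a local computation shows that ${\rm Diff}_D(0)$ vanishes at the generic point of any prime divisor of $D$ along which $X$ is smooth, so its support lies in $D\cap{\rm Sing}(X)$. The codimension hypothesis forces this support to have codimension $\geq 2$ in $D$, and since ${\rm Diff}_D(0)$ is a divisor this gives ${\rm Diff}_D(0)=0$. Therefore
\[
K_{\tilde D}-g^*K_D=\sum_i a_i\bigl(E_i|_{\tilde D}\bigr).
\]

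For any $g$-exceptional prime divisor $F\subset\tilde D$, the snc condition guarantees that $F$ is contained in exactly one $E_{i_0}$ and that $E_{i_0}|_{\tilde D}$ has multiplicity one along $F$, so reading off the coefficient of $F$ gives $a(D,0,F)=a_{i_0}$. Since $F\subseteq\tilde D\subseteq f^{-1}(D)$, the image $f(F)$ lies in $D$, so blowing up $F$ inside $\tilde X$ produces an exceptional divisor $F'$ over $X$ whose center in $X$ is contained in $D$; a direct computation using the snc data shows that $a(X,D,F')=a_{i_0}$ as well. Combining,
\[
a(D,0,F)=a(X,D,F')\ \geq\ {\rm discrep}({\rm center}\subset D,X,D),
\]
and taking the infimum over all $F$ yields \eqref{discr}. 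The canonical case follows at once, since if all $a_i\geq 0$ then $a(D,0,F)\geq 0$.

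The main obstacle I anticipate is the vanishing of ${\rm Diff}_D(0)$: this is the technical heart of the argument, and it is precisely the codimension-$2$ hypothesis on $D\cap{\rm Sing}(X)$ that keeps the different from acquiring divisorial components and therefore permits the on-the-nose identification of $K_D$ with $(K_X+D)|_D$. A secondary point is to verify that every exceptional valuation of $D$ can be realized on some log resolution of $(X,D)$, which is handled by passing to a common refinement of any given resolution of $D$ with the chosen log resolution and using that discrepancies depend only on the underlying divisorial valuation rather than on any particular model.
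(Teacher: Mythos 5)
The paper does not actually prove this statement: its ``proof'' consists of citing \cite[Proposition 5.46]{KM} and \cite[\S 17.2]{K92} for the inequality, plus the trivial observation that ${\rm discrep}({\rm center}\subset D,X,D)\geq{\rm discrep}(X,D)$ for the last sentence. Your argument is a correct reconstruction of the proof of that cited result, and it follows the same route: restrict the discrepancy formula $K_{\tilde X}+\tilde D=f^*(K_X+D)+\sum_i a_iE_i$ of a log resolution to the strict transform $\tilde D$, identify $(K_X+D)|_D$ with $K_D$ because the codimension hypothesis kills the different, and read off $a(D,0,F)$ from the transverse snc intersections. The two points you yourself flag as delicate --- the vanishing of ${\rm Diff}_D(0)$ and the fact that every exceptional divisorial valuation of $D$ is realized on the strict transform of $D$ in some log resolution of $(X,D)$ --- are exactly the places requiring care, and your handling of them (support of the different lies in ${\rm Sing}(X)\cap D$; extract a given valuation first and then resolve further) is the standard and correct one.
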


\begin{proof}
The inequality follows from \cite[Proposition 5.46]{KM} 
taking $Z=S$ and $B=0$ or \cite[\S 17.2]{K92}.
The last statement immediately follows 
since ${\rm discrep}({\rm center}\subset D, X,D) \geq  {\rm discrep}(X,D)$.
 \end{proof}

\begin{proposition}\label{can}
Let $X$ be a $\qq$-Fano toric variety with canonical singularities 
and let
$\Delta\subset M_\qq$ be a lattice polytope contained in the 
anticanonical polytope $\Theta$ of $X$.
If $\Delta$ is a canonical polytope 
then the general element $D$ of $\mathcal F_{\Delta,\Theta^*}$ 
is well-formed, normal and has canonical singularities. 
\end{proposition}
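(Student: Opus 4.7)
The plan is to verify the three assertions in turn: well-formedness and normality will follow from Proposition \ref{norm}(ii)--(iii) together with Lemma \ref{canon1}, while canonical singularities will follow from Theorem \ref{kollar} after the pair $(X,D)$ is shown to be canonical via a discrepancy computation on a toric resolution.

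The only nontrivial case in Proposition \ref{norm}(ii)--(iii) concerns a pair $n_i, n_j$ lying on a common facet of $\Theta^*$ but on no common facet of $\Delta^*$. A convexity argument then shows that the open segment $(n_i,n_j)$ lies entirely in the interior of $\Delta^*$: for every vertex $u$ of $\Delta$ one has $\langle u,n_i\rangle, \langle u,n_j\rangle\geq -1$ with at most one equality---otherwise $u$ would define a common facet of $\Delta^*$ through $n_i,n_j$---so every strict convex combination is strictly greater than $-1$. Any lattice point on the open segment would therefore be a nonzero interior lattice point of $\Delta^*$, contradicting Lemma \ref{canon1}; this yields well-formedness. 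For normality, the key observation is that $n_i+n_j$ is a \emph{nonzero} lattice point: its vanishing would place the origin as the midpoint of a segment lying on a facet of $\Theta^*$, contradicting $0\in\mathrm{int}(\Theta^*)$. Lemma \ref{canon1} then forces $n_i+n_j\notin\mathrm{int}(\Delta^*)$, which is precisely the condition in Proposition \ref{norm}(iii).

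With normality and well-formedness in hand, Theorem \ref{kollar} reduces canonical singularities of $D$ to canonicity of the pair $(X,D)$. To establish the latter I would take a smooth toric resolution $\pi\colon\tilde X\to X$ refining $\Sigma$ to a smooth fan. Since each $n_i$ is a nonzero lattice point of $\Delta^*$ and hence cannot be interior to $\Delta^*$, one has $h_\Delta(n_i)=-1$ on every original ray, so a general $f$ is not divisible by any $T_i$ and $D$ contains no toric divisor $D_i$; tracking the piecewise-linear functions $h_\Theta$ and $h_\Delta$ along the refinement then yields the discrepancy formula
\[
a(X,D,D_\rho)=-1-h_\Delta(n_\rho)
\]
at every new toric divisor $D_\rho$. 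Lemma \ref{canon1} gives $h_\Delta(n_\rho)\leq -1$ for every nonzero lattice point $n_\rho$, so all these discrepancies are nonnegative.

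The main technical obstacle is passing from this toric computation to canonicity of $(X,D)$ along \emph{every} exceptional valuation, not merely toric ones. For this I would invoke Bertini applied to the torus-invariant linear system $\mathcal F_{\Delta,\Theta^*}$---whose base locus is contained in the toric boundary of $X$---to arrange that the proper transform $\tilde D$ is smooth on $\tilde X$ and meets each toric stratum transversally. Then $\pi$ itself serves as a log resolution of $(X,D)$ with only toric exceptional divisors, and the computation above exhausts all exceptional discrepancies. Theorem \ref{kollar} then concludes that $D$ has canonical singularities.
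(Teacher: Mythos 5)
Your proposal is correct and follows essentially the same route as the paper: well-formedness and normality are reduced to Proposition \ref{norm} via Lemma \ref{canon1} (you even make explicit the two small points the paper leaves implicit, namely that the open segment $(n_i,n_j)$ lies in the interior of $\Delta^*$ and that $n_i+n_j\neq 0$), and canonicity is obtained from the discrepancy formula $a(X,D,E)=-1-h_\Delta(n)$ on a toric log resolution together with Theorem \ref{kollar}. The only point to state a bit more carefully is that a smooth refinement of $\Sigma$ alone need not make $\pi$ a log resolution of $(X,D)$; as in the paper, one must further blow up torically along the (torus-invariant) base locus of $\mathcal F_{\Delta,\Theta^*}$ until its proper transform is base point free, after which Bertini gives the smoothness and transversality you assert.
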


\begin{proof}
If $\Delta$ is canonical, then $\Delta^*$ 
is a polytope and its only interior lattice point is the origin 
by  Lemma \ref{canon1}. 
By Proposition \ref{norm} we have that $D$ is well-formed 
since, if $n_i,n_j$ belong to a facet of $\Theta^*$ and not to a facet 
of $\Delta^*$,  then the segment joining them 
intersects the boundary of $\Delta^*$ only at $n_i,n_j$.
Moreover, by the same proposition, $D$ is normal.

Since $D$ is general, there exists a toric resolution 
of singularities $f:\tilde X\to X$  
which is a log resolution for $D$, 
obtained by means of a refinement $\tilde\Sigma$ of 
the fan $\Sigma$ of $X$. 
This can be obtained taking first a toric resolution 
of the singularities of $X$ and then successive 
toric blow-ups along the base locus of $\mathcal F_{\Delta,\Theta^*}$ 
until its proper transform is base point free.
By the first Bertini's theorem the general element $\tilde D$ 
of such proper transform is smooth.
Moreover, the same theorem implies that $\tilde D$
intersects transversally each component 
of the exceptional locus, since its restriction 
to any such component is base point free.

Let $E$ be an exceptional divisor of $f$ 
and let $n\in N$ be the primitive generator of the 
corresponding ray of $\tilde \Sigma$.
Observe that 
\[
{\rm mult}_{E}(K_{\tilde X}-f^*(K_X))=-1-{\rm mult}_{E}f^*(K_X)
\] 
since $E$ is one of the integral torus invariant divisors of $\tilde X$.
We can write $D={\rm div}(\chi)-K_X$,
where  $\chi$ is a general linear combination of 
$\chi^{u}$, where $u$ belongs to the support ${\rm supp}(\phi)$ 
of a defining equation $\phi$ of $D$.
Then 
\[
{\rm mult}_{E}(f^*(D)-f_*^{-1}(D))={\rm mult}_{E}(f^*(D))
={\rm mult}_{E}(f^*{\rm div}(\chi))-{\rm mult}_{E}f^*(K_X)
\]
\[
=\min_{u\in {\rm supp}(\phi)}\{{\rm mult}_{E}(f^*\chi^{u})\}-{\rm mult}_{E}f^*(K_X)
=\min_{u\in {\rm supp}(\phi)}\{(u,n)\}-{\rm mult}_{E}f^*(K_X),
\]
where the third equality is due to the generality assumption on 
$D$  and the last equality to the fact that
\[
{\rm mult}_Ef^*{\rm div}(\chi^{u})={\rm mult}_E{\rm div}(f^*\chi^{u})=(u,n).
\]
This gives
\begin{equation}\label{discr}
a(X,D,E)=-1-\min_{u\in {\rm supp}(\phi)}\{(u,n)\}=-1-\min_{u\in \Delta\cap M}\{(u,n)\},
\end{equation}
where the second equality is due to the fact that ${\rm supp}(\phi)$ 
contains the vertices of $\Delta$.
Such discrepancy is non-negative 
since $\Delta^*$ has no non-zero interior lattice point.
Theorem \ref{kollar} thus implies that 
$D$ has canonical singularities.
\end{proof}

\begin{corollary}\label{crep}
Let $X$ be a $\qq$-Fano toric variety with canonical singularities,
$\Delta$ be a canonical polytope contained in its anticanonical polytope  $\Theta$
and $D$ be a general element in $\mathcal F_{\Delta,\Theta^*}$.
A birational toric morphism $f:\tilde X\to X$ induces a crepant morphism 
$f^*(D)\to D$ if and only if the rays of the fan of $\tilde X$ 
are generated by nonzero lattice points in $\Delta^*$.
\end{corollary}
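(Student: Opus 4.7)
The plan is to apply directly the discrepancy formula derived at the end of the proof of Proposition~\ref{can}. Recall that for any exceptional divisor $E$ of a toric birational morphism $f:\tilde X \to X$, corresponding to a primitive ray generator $n \in N$ of the refined fan $\tilde \Sigma$, the discrepancy of the pair $(X,D)$ along $E$ is
\[
a(X,D,E) = -1 - \min_{u \in \Delta \cap M}\{(u,n)\}.
\]
Although that formula was derived in the context of a specific log resolution, it depends only on the divisorial valuation attached to $n$ (together with the generality of $D$, which ensures that the support of its defining equation contains the vertices of $\Delta$, so that the minimum over $\mathrm{supp}(\phi)$ coincides with the minimum over $\Delta$). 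It therefore transfers without modification to any toric refinement of $\Sigma$.

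The morphism $f^*(D) \to D$ is crepant precisely when $a(X,D,E)=0$ for every exceptional $E$, that is when $\min_{u \in \Delta}(u,n) = -1$ for the primitive generator $n$ of each new ray of $\tilde \Sigma$. By the definition of the polar polytope, $\min_{u \in \Delta}(u,n) \geq -1$ is equivalent to $n \in \Delta^*$, with equality characterizing points of $\partial \Delta^*$. Since $\Delta$ is canonical, Lemma~\ref{canon1} guarantees that the origin is the only interior lattice point of $\Delta^*$; hence every nonzero lattice point of $\Delta^*$ lies on $\partial \Delta^*$ and realizes $\min(u,n) = -1$. Conversely, any $n$ attaining this minimum is in $\Delta^*$, and is nonzero because $n=0$ would give minimum $0$.

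It remains to pass from a statement about the new rays of $\tilde \Sigma$ to one about all its rays. The rays of the fan of $X$ are generated by the vertices of $\Theta^*$, and the inclusion $\Delta \subseteq \Theta$ yields $\Theta^* \subseteq \Delta^*$, so the old rays automatically lie in $\Delta^*$ as nonzero lattice points; being non-exceptional, they contribute nothing to the discrepancy. Thus requiring that every new ray be a nonzero lattice point of $\Delta^*$ is equivalent to the same condition holding for every ray of $\tilde \Sigma$, which yields the stated biconditional. The only delicate point is the one addressed in the first paragraph: verifying that the discrepancy formula of Proposition~\ref{can} is intrinsic to the ray $n$ and independent of whether the ambient $\tilde X$ is smooth or whether the refinement resolves the base locus of $\mathcal F_{\Delta,\Theta^*}$.
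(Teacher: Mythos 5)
Your proof is correct and follows essentially the same route as the paper, which simply reads off from the formula $a(X,D,E)=-1-\min_{u\in\Delta\cap M}(u,n)$ established in the proof of Proposition~\ref{can} that $a(X,D,E)=0$ if and only if $n$ is a nonzero lattice point of $\Delta^*$. The extra care you take — noting that the discrepancy is intrinsic to the divisorial valuation of the ray $n$, and that the old rays lie in $\Theta^*\subseteq\Delta^*$ automatically — is sound and merely makes explicit what the paper leaves implicit.
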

\begin{proof}
It follows from (\ref{discr}) that $a(X,D,E)=0$ if and only if 
$n\in \Delta^*$.
\end{proof}

\begin{remark}
By the proof of Proposition \ref{can} the discrepancies 
of a general anticanonical hypersurface $D$ 
only depend on its Newton polytope $\Delta$.
\end{remark}

\begin{remark}\label{conj}
As a consequence of the adjunction Conjecture \cite[Theorem 4.9]{KK} 
formulated by Shokurov and Koll\'ar,
the inequality (\ref{discr}) is actually an equality.
We now show that, under such conjecture, 
the condition on the polytope $\Delta$ 
in Proposition \ref{can} is also a necessary condition for
$D$ to be normal with canonical singularities.
Assume that $n\in N$ 
is a non-zero primitive vector in the interior of  $\Delta^*$.
Let $\tilde \Sigma$ be a smooth fan refining the 
star subdivision of the fan $\Sigma$ of $X$ induced by $n$.
This gives a resolution $f$ of $X$ 
and $n$ corresponds to an exceptional divisor $E$ of $f$.
Let $\sigma$ be the cone of $\Sigma$ containing 
$n$ in its interior.  
The primitive generators of the 
rays of $\sigma$ are not contained 
in a facet of $\Delta^*$, 
since otherwise this would also be a facet 
of $\Theta^*$ and $n$ would be an interior point 
of $\Theta^*$, contradicting the fact 
that $X$ has canonical singularities (see Theorem \ref{qfano}).
Thus $f(E)\subseteq D$ by Lemma \ref{hyp}.
Since $n$ is in the interior of $\Delta^*$, the  
computation in the proof of Proposition \ref{can} 
gives that $a(X,D,E)<0$,
thus by \cite[Theorem 4.9]{KK}  
$D$ has a non-canonical singularity.
\end{remark}

\begin{corollary}\label{gencor}
Let $X$ be a $\qq$-Fano toric variety with canonical singularities.
If the convex hull of the lattice points of the anticanonical polytope of $X$  
is a canonical polytope, then a general anticanonical hypersurface of $X$ 
 is well-formed, normal and 
has canonical singularities.
\end{corollary}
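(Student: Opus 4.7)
The plan is to reduce directly to Proposition \ref{can} applied with $\Delta := \Conv(\Theta \cap M)$, where $\Theta$ is the anticanonical polytope of $X$. By construction $\Delta$ is a lattice polytope contained in $\Theta$, and it is canonical by the standing hypothesis, so all the assumptions of Proposition \ref{can} are satisfied for this choice of $\Delta$.

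The one thing that needs to be checked is that a general anticanonical hypersurface of $X$ is in fact a general element of $\mathcal F_{\Delta,\Theta^*}$, i.e.\ that its Newton polytope equals $\Delta$. This is essentially built into the setup of Section \ref{sec-hyp}: the monomials $m_u$ with $u \in \Theta \cap M$ form a basis of the Riemann--Roch space $H^0(X,-K_X)$, so a general section of $-K_X$ has non-zero coefficient on each of these basis monomials. Hence its support is exactly $\Theta \cap M$ and its Newton polytope is $\Conv(\Theta \cap M) = \Delta$.

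Once this identification is in place, Proposition \ref{can} immediately yields that a general element of $\mathcal F_{\Delta,\Theta^*}$, and therefore a general anticanonical hypersurface of $X$, is well-formed, normal, and has canonical singularities.

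There is no real obstacle here: the corollary is designed as the $\Delta = \Conv(\Theta \cap M)$ specialisation of Proposition \ref{can}, and the only substantive content beyond that proposition is the elementary remark that the Newton polytope of a generic anticanonical section coincides with the convex hull of the lattice points of $\Theta$.
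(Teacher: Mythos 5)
Your proof is correct and is exactly the paper's argument: the corollary is obtained by applying Proposition \ref{can} with $\Delta = \Conv(\Theta\cap M)$. Your additional remark that a general anticanonical section has Newton polytope equal to $\Conv(\Theta\cap M)$ is a valid (and implicitly assumed) justification of the reduction.
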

\begin{proof}
It follows from Proposition \ref{can} taking $\Delta$ to be the 
the convex hull of $\Theta\cap~ M$.
\end{proof}

\subsection{Quasismooth hypersurfaces}
A hypersurface $D$ of a projective toric variety $X$ is 
called {\em quasismooth (or transverse)} if $p^{-1}(D)$ is smooth, 
where $p:\hat X\to X$ 
is the quotient map in the Cox construction of $X$.

If $X$ is a weighted projective space, 
a quasismooth hypersurface of $X$ of dimension $\geq 3$ 
is known to be well-formed, 
unless it is isomorphic to a toric stratum \cite[Proposition 6]{Dim}.
This result can be generalized as follows.

\begin{proposition}
Let $X$ be a projective toric variety  
whose irrelevant locus has codimension 
$>4$ in $\hat X$.
A quasismooth hypersurface $D$ of  
 $X$ is either well-formed or it 
 is isomorphic to a toric stratum of $X$.
\end{proposition}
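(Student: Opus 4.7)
The plan is to extract information from the failure of well-formedness via Cox coordinates, then exploit quasismoothness as a codimension estimate. By the analysis in the proof of Proposition~\ref{norm}, if $D$ is not well-formed one finds two rays $\rho_i,\rho_j$ generating a non-smooth $2$-dimensional cone of $\Sigma$, whose stratum $D_{ij}$ lies simultaneously in $\operatorname{Sing}(X)$ and in $D$; equivalently, the polynomial $f \in \R(X)$ defining $\hat D \subset \hat X$ lies in the ideal $(x_i, x_j)$. I would split $f = x_i g + x_j h$ canonically, where $g$ collects the monomials of $f$ containing $x_i$ (divided by $x_i$) and $h$ collects the monomials containing $x_j$ but not $x_i$ (divided by $x_j$); then $h$ is independent of $x_i$ and both $g,h$ are multi-homogeneous.

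The core of the argument is then a codimension count on $V(x_i, x_j, g, h) \subseteq \bar X \cong \cc^r$. Any monomial of $f$ retains a factor of $x_i$ or $x_j$ after differentiating with respect to $x_k$ for $k \neq i, j$, so $\partial_k f$ vanishes on $V(x_i, x_j)$; meanwhile $\partial_i f|_{V(x_i, x_j)} = g|_{V(x_i, x_j)}$ and $\partial_j f|_{V(x_i, x_j)} = h|_{V(x_i, x_j)}$. Hence $V(x_i, x_j, g, h)$ is contained in the vanishing locus of $f$ together with all its partials, and quasismoothness forces this inside the irrelevant locus $V(I)$. Being cut out by four equations, $V(x_i, x_j, g, h)$ has codimension at most $4$ in $\bar X$, whereas by hypothesis $V(I)$ has codimension strictly greater than $4$; the inclusion therefore forces $V(x_i, x_j, g, h) = \emptyset$, so the restrictions $\bar g$ and $\bar h$ of $g, h$ to $V(x_i, x_j) \cong \cc^{r-2}$ share no common zero in $\cc^{r-2}$.

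Since $X$ is projective, its effective cone is strongly convex and the only degree-zero monomials in $\R(X)$ are the scalars, so every multi-homogeneous polynomial of nonzero degree vanishes at the origin of $\bar X$. At least one of $\bar g, \bar h$ must therefore be a nonzero scalar; WLOG $\bar g \in \cc^*$, hence $g$ itself is a nonzero constant $c$, $\deg f = \deg x_i$, and $f = c x_i + x_j h$ with $h$ free of $x_i$. The graded substitution $x_i \mapsto x_i - c^{-1} x_j h$ then sends $f$ to $c x_i$, identifying the Cox lifts of $D = V(f)$ and of the toric stratum $D_i = V(x_i)$, and so $D \cong D_i$. The main obstacle is this last descent step: in the weighted projective setting of~\cite{Dim} it is elementary because $V(I) = \{0\}$, but for a general toric $X$ one must interpret the substitution as a product of Demazure-type root automorphisms attached to $\rho_i$, one for each monomial of $x_j h$ (each of which has the correct degree $\deg x_i$ and is not divisible by $x_i$), and verify that root subgroups descend to $\Aut(X)$ so that the substitution preserves the irrelevant ideal.
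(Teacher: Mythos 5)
Your proposal is correct and follows essentially the same route as the paper's proof: decompose $f=x_ig+x_jh$ along the offending codimension-two stratum, observe that $V(x_i,x_j,g,h)$ lies in the common zero locus of $f$ and its partials, and play the codimension-$\le 4$ bound against the codimension $>4$ of the irrelevant locus to force one of $g,h$ to be a nonzero constant, after which the explicit shear identifies $D$ with the stratum. Your added care about why a non-scalar homogeneous restriction must vanish at the origin, and about descending the substitution to $\Aut(X)$ via Demazure roots, only makes explicit steps the paper leaves implicit.
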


\begin{proof}
Let $f$ be a defining element for $D$ 
in the Cox ring $R(X)=\cc[x_1,\dots,x_r]$.
Assume that $D$ is not well-formed, 
in particular it contains a codimension two 
toric stratum of $X$.
Thus we can assume $f$ to be of the form 
\[
f(x_1,\dots,x_r)=x_1f_1+x_2f_2.
\]
Computing the partial derivatives of $f$ 
one can see that they all vanish 
along the subset $S$ of $\hat X$ defined by 
$\{x_1=x_2=f_1=f_2=0\}$.
If neither $f_1$ or $f_2$ is constant,
we have that 
\[
\dim(S)\geq \dim(\hat X)-4>\dim(\bar X-\hat X),
\] 
contradicting the fact that $D$ is quasismooth.
Thus we can assume that $f_1$ is constant,
so that $f(x_1,\dots, x_r)=\alpha x_1+x_2f_2$ 
is isomorphic to the stratum $x_1=0$ by the isomorphism 
$(x_2,\dots, x_r)\mapsto (-x_2\alpha^{-1}f_2,x_2,\dots,x_r)$.
\end{proof}

Quasismooth and well-formed anticanonical 
hypersurfaces give a class of hypersurfaces 
with canonical singularities. However, as we will 
observe later, such class is quite small 
in dimension bigger than three.

\begin{proposition}\label{prop-canonicalsing}
Let $D$ be an anticanonical hypersurface 
of a projective toric variety $X$.
If $D$ is quasismooth and well-formed, 
then $D$ has canonical singularities.
\end{proposition}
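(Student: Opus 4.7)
The plan is to show that every singular point of $D$ has an analytic neighborhood isomorphic to the quotient of a smooth variety by a finite abelian subgroup of $\Sl_n(\cc)$, and then to invoke the Reid--Shepherd-Barron--Tai criterion.

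First I would localize the problem. Since $p\colon \hat X\to X$ is a $G_\Sigma$-torsor over the smooth locus of $X$ and $\hat D=p^{-1}(D)$ is smooth by quasismoothness, the singular locus of $D$ lies over ${\rm Sing}(X)$; well-formedness then ensures that it has codimension $\geq 2$ in $D$. For a singular point $q\in D$, pick a preimage $\bar q\in \hat D$ whose $G_\Sigma$-orbit is closed in $\hat D$, let $\Gamma\subset G_\Sigma$ be its stabilizer, and apply Luna's slice theorem: there is a $\Gamma$-invariant smooth slice $S$ to this orbit such that $D$ is \'etale-locally $S/\Gamma$ near $q$. Since $G_\Sigma$ is a quasi-torus, $\Gamma$ is finite abelian.

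Next I would verify that $\Gamma$ acts on $T_{\bar q}S$ through $\Sl$. This is where the anticanonical hypothesis enters: $\hat D$ is cut out in $\hat X$ by a $G_\Sigma$-semi-invariant polynomial $\phi$ of degree $[-K_X]$, so the conormal line $\mathcal I_{\hat D}/\mathcal I_{\hat D}^2$ at $\bar q$ carries the character $[-K_X]$; meanwhile $G_\Sigma$ acts on $\omega_{\hat X}$ by the same character, since $dx_1\wedge\cdots\wedge dx_r$ has degree $\sum_i\deg(x_i)=[-K_X]$. Adjunction $\omega_{\hat D}=\omega_{\hat X}|_{\hat D}\otimes N_{\hat D/\hat X}$ then shows that $G_\Sigma$ acts trivially on $\omega_{\hat D}$ at $\bar q$; because $G_\Sigma$ is abelian, the tangent space to the orbit $G_\Sigma\cdot \bar q$ carries a trivial $\Gamma$-action, so the induced action on $\det T_{\bar q}S$ is also trivial.

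Finally, I would appeal to the elementary fact that any finite abelian $\Gamma\subset \Sl_n(\cc)$ automatically satisfies the Reid--Tai condition: writing a non-trivial $\gamma\in \Gamma$ in diagonal form as ${\rm diag}(\zeta^{a_1},\ldots,\zeta^{a_n})$ with $\zeta$ a primitive $r$-th root of unity and $0\le a_i<r$, the condition $\det\gamma=1$ gives $\sum_i a_i\equiv 0\pmod r$, forcing $\sum_i a_i\geq r$ and hence age $\geq 1$; no non-trivial quasi-reflection can occur for the same reason. It follows that $S/\Gamma$ has canonical singularities, and so does $D$. The main obstacle will be the character computation for the adjunction step, together with a careful application of Luna's slice theorem in the quasi-torus rather than genuinely finite group setting.
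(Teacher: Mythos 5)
Your route is genuinely different from the paper's. The paper argues globally: quasismoothness gives normality; well-formedness plus quasismoothness let one apply the adjunction formula to get $K_D\sim\Osh_D$, so $D$ is Gorenstein; since $\hat D$ is smooth and $D$ is a good quotient of $\hat D$ by the reductive group $G_\Sigma$, Boutot's theorem gives that $D$ has rational singularities; and Gorenstein rational singularities are canonical (Elkik/Koll\'ar). Your local argument via Luna slices and Reid--Tai is sound in its individual steps: the character computation showing that $G_\Sigma$ acts trivially on $\omega_{\hat D}$ is correct (the conormal direction carries $[-K_X]$ and so does $\det T\hat X$, so they cancel), the isotropy action on the tangent space to the orbit is trivial because $G_\Sigma$ is abelian, and the observation that finite subgroups of $\Sl_n(\cc)$ contain no quasi-reflections and satisfy Reid--Tai is the standard fact that Gorenstein quotient singularities are canonical. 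When it applies, your argument gives more: an explicit local description of $D$ as an abelian $\Sl$-quotient singularity.

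The gap is the finiteness of $\Gamma$. The proposition is stated for an arbitrary projective toric variety $X$, with no simplicial hypothesis, and when $\Sigma$ has non-simplicial cones the quotient $p\colon\hat X\to X$ is only a good (not geometric) quotient: the closed orbit over a deep stratum has a \emph{positive-dimensional} diagonalizable stabilizer, so your $\Gamma$ need not be finite and the Reid--Tai criterion does not apply to the slice representation. You would need an extra argument for the torus part of $\Gamma$ — e.g.\ that a linear torus action with weights summing to zero yields a Gorenstein affine toric quotient, which is canonical because toric singularities are rational — at which point you are essentially reproducing the paper's ``Gorenstein $+$ rational $\Rightarrow$ canonical'' step. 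A second, smaller point: your well-formedness hypothesis ends up playing no visible role in the Reid--Tai step (the $\Sl$-condition on $T_{\bar q}\hat D$ holds at every point of an anticanonical quasismooth $\hat D$), whereas in the paper it is exactly what licenses the adjunction formula $K_D\sim\Osh_D$; you should say explicitly where you use it, or acknowledge that your argument is claiming something formally stronger and check that against the paper's examples of quasismooth non-well-formed hypersurfaces (which are isomorphic to toric strata and need not be canonical).
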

\begin{proof}
Since $D$ is quasismooth, then $D$ is normal by 
the proof of Proposition \ref{norm}.
Moreover, by \cite[Proposition 4.5, (1) and (5)]{KK}  
the adjunction formula holds for $D$ 
and gives that $K_D\sim \Osh_D$.
In particular $D$ has Gorenstein singularities.
Moreover, since $\hat D$ is smooth, the singularities 
of $D$ are rational \cite[Corollaire]{Bo}.
By \cite[Corollary 11.13]{Ko2} Gorenstein rational 
singularities are canonical.
\end{proof}

\begin{remark}
In \cite{Ba} Batyrev considered 
a different notion of regularity: an anticanonical hypersurface 
$Y$ of a toric Fano variety $X$ is regular if 
the intersection of $Y$ with 
any toric stratum of $X$ is either empty 
or smooth of codimension one.
Any regular hypersurface is quasismooth,
see \cite[Proposition 4.15]{BC} or \cite[Proposition 5.3]{C}.
\end{remark}

\subsection{Examples}\label{examples}
Given a $\qq$-Fano toric variety with canonical 
singularities and with anticanonical polytope $\Theta$, 
we can consider three different 
properties for the convex hull $\bar\Theta$ 
of the lattice points of $\Theta$: 
canonical, reflexive and quasismooth. 
Here we say that $\bar\Theta$ is {\em quasismooth} if 
such property holds for the general anticanonical hypersurface.
In the case of weighted projective spaces 
it is known that the three concepts are equivalent 
in dimension $2$ and $3$.
In dimension $4$, canonical implies reflexive \cite[Theorem, \S 3]{Sk}.
Moreover, for weighted projective spaces of any dimension, 
quasismooth  implies canonical \cite[Lemma 2]{Sk}.
In higher dimension the concepts of reflexive and quasismooth 
are unrelated and there are examples of canonical
polytopes which are neither quasismooth nor reflexive.
In Table \ref{table-conta} we show
the number of weight systems $w=(w_1,\ldots,w_6)$ 
with $w_i\leq 10$
such that the anticanonical polytope $\Theta$ of $\pp(w)$ 
is reflexive ($F$), 
 $\bar\Theta$  is reflexive ($R$),
 $\bar\Theta$ is canonical ($C$) and not reflexive 
 and we distinguish whether the general anticanonical 
hypersurface of $\pp(w)$ is quasismooth ($Q$) or not. 
The properties of the anticanonical 
polytope can be checked 
by means of Magma \cite{Magma} 
and the programs available here:
\begin{center}
\href{http://goo.gl/A7W17Z}{http://goo.gl/A7W17Z}.
\end{center}
\noindent See also the Calabi-Yau data webpage by  Kreuzer and Skarke 
\begin{center}
\href{http://hep.itp.tuwien.ac.at/~kreuzer/CY/}{http://hep.itp.tuwien.ac.at/~kreuzer/CY/}.
\end{center}

\begin{table}[h!]
\begin{tabular}{c|c|c|c|c}
weights up to& $F$ & $R$ & $Q$ and not $R$ & $C$ not $R$ and not $Q$\\
\hline
2&3&4&1&0\\
3&6&13&5&2\\
4&10&39&11&3\\
5&15&83&30&30\\
6&28&164&45&63\\
7&31&300&89&193\\
8&44&524&133&358\\
9&52 & 833 & 190 & 747\\
10&71& 1278& 269 & 1221
\end{tabular}
\vspace{0.4cm}

\caption{Counting weight systems in dimension $5$}
\label{table-conta}
\end{table}

\begin{remark}
Let $X$ be a $\qq$-Fano toric variety and assume that 
there exists a toric Fano variety $X'$ and a 
birational toric map $X\to X'$ which induces 
a bijection between the anticanonical linear series $|-K_X|$ 
and $|-K_{X'}|$. By standard facts in toric geometry, 
this map is induced by an isomorphism $\varphi:M\to M'$ 
which gives a bijection between the lattice points 
of the anticanonical polytope $\Theta$ of $X$ and those 
of the anticanonical polytope $\Theta'$ of $X'$. 
In particular $\varphi_\qq$ induces an isomorphism 
between the convex hulls of the lattice points of $\Theta$ 
and $\Theta'$.
Since $X'$ is Fano, the polytope $\Theta'$ is reflexive by \cite[Theorem 4.1.9]{Ba},
thus the convex hull of the lattice points of $\Theta$ is reflexive.
This shows that, if $\bar\Theta$ is not reflexive,
then the general anticanonical hypersurface of $X$ 
is not (torically) birational to a hypersurface in a toric Fano variety. 
\end{remark}

\begin{example}[$R$, $C$ and $Q$]
In dimension 3 there are 104 weighted projective spaces with canonical singularities; 
for 95 of them the convex hull of lattice points of the anticanonical polytope is a canonical, reflexive and quasismooth polytope.
Moreover, for 14 of these weight systems the anticanonical polytope is reflexive, i.e. the weighted projective space is Fano.
\end{example}

\begin{example}[$R$ and not $Q$]\label{r}
$X=\pp(1,1,1,3,4) $
is a toric $\qq$-Fano variety such that $\bar\Theta$ is reflexive.
Observe that an anticanonical hypersurface of $X$ 
is defined by an equation of the form:
\[
f(x_1,\dots,x_5)=x_1f_1+x_2f_2+x_3f_3+x_4f_4,
\]
since it has degree $10$ and there is no power 
of such degree in the variable $x_5$.
All partial derivatives vanish at the point $(0:0:0:0:1)$ since 
$f_1,f_2,f_3,f_4$ do not contain a power 
of $x_5$. Thus the general anticanonical 
hypersurface of $X$ is not quasismooth. 
\end{example}

\begin{example}[$Q$ and not $R$]\label{q}
$X=\pp(1,1,1,1,1,2) $
is a  toric $\qq$-Fano variety such that $\bar\Theta$ is canonical and
not reflexive. 
A general anticanonical hypersurface of $X$ 
is defined by an equation of the form
\[
f(x_1,\dots,x_6)=x_1f_1+x_2f_2+x_3f_3+x_4f_4+x_5f_5.
\]
By Bertini's theorem, the singular locus of $f$ in $\cc^6$
is contained in the base locus of the corresponding 
linear system, which only contains the point $(0:0:0:0:0:1)$. 
The partial derivatives of  $f$ do not vanish at such point since 
we can assume that $f_1$ (for example) 
contains the monomial $x_6^3$. Thus $f=0$ is quasismooth. 
More examples are given in Table \ref{tableqs}.
\end{example}

\begin{example}[$C$ not $R$ and not $Q$] \label{c}
$X=\pp(1,1,2,3,3,3)$ 
is a  toric $\qq$-Fano variety such that $\bar\Theta$ 
is canonical and not reflexive and such that the general anticanonical 
hypersurface is not quasismooth. 
More examples are given in Table \ref{tableqs}.
\end{example}

 \begin{table}[h!]
\footnotesize
\begin{tabular}{c|c}
$C$, not $R$, $Q$&$C$, not $R$, not $Q$\\
\hline

    ( 1, 1, 1, 1, 1, 2 )&( 1, 1, 2, 3, 3, 3 )\\
    ( 1, 2, 2, 2, 3, 4 )&( 1, 1, 2, 3, 3, 4 )\\
    ( 1, 1, 1, 1, 2, 3 )&( 1, 1, 1, 2, 3, 3 )\\
    ( 1, 1, 2, 2, 2, 3 )&\\
    ( 1, 2, 3, 3, 3, 3 )&\\
    ( 1, 1, 1, 3, 3, 4 )&\\
    ( 1, 2, 2, 3, 3, 4 )&\\
    ( 1, 1, 2, 2, 3, 4 )&\\
    ( 1, 1, 3, 3, 3, 4 )&\\
    ( 1, 2, 2, 3, 4, 4 )&\\
    ( 1, 1, 1, 2, 2, 3 )&\\
    
\end{tabular}
\vspace{0.4cm}
\caption{Weights $\leq 4$ having quasismooth and non-quasismooth general anticanonical hypersurface}
\label{tableqs}
    \end{table}

\section{A duality between families of Calabi-Yau hypersurfaces}\label{sec-duality}
 We recall that an $n$-dimensional normal projective variety $Y$ is a {\em Calabi-Yau variety} 
if it has canonical singularities, $K_Y\cong \Osh_Y$ and $h^i(Y,\Osh_Y)=0$ 
for $0<i<n$.

In \cite[Theorem 4.1.9]{Ba} Batyrev proved that a projective toric variety 
$X_{\Delta}$ is Fano, or equivalently its anticanonical polytope $\Delta$ is reflexive, 
if and only if regular anticanonical hypersurfaces $Y$ of $X$ 
are Calabi-Yau varieties. 
Moreover he defines a duality between families 
of anticanonical hypersurfaces of Fano toric varieties:
\[
\mathcal F_{\Delta,\Delta^*}\subseteq |-K_{X_{\Delta}}| \longleftrightarrow \mathcal F_{\Delta^*,\Delta}\subseteq |-K_{X_{\Delta^*}}|. 
\]
In this section we will introduce a generalization 
of this duality in case $X$ is $\qq$-Fano 
and the family of hypersurfaces is not necessarily the 
full anticanonical linear system.
Such generalization is based on the result given in Theorem \ref{cy};
using the characterization of Proposition \ref{can} we can now prove it.  
Observe that by Remark \ref{conj}, if the equality holds in \eqref{discr},
this would provide a characterization of 
$\qq$-Fano toric varieties whose general 
anticanonical hypersurfaces are Calabi-Yau.

\begin{proof}[Proof of Theorem \ref{cy}]
In what follows $D$ will denote a 
general anticanonical hypersurface of $\mathcal F_{\Delta,\Theta^*}$.
By Proposition \ref{can} $D$ is well-formed, normal 
and has canonical singularities.
By \cite[Proposition 4.5, (1) and (5)]{KK}
the adjunction formula holds for $D$,
giving that $K_D$ is trivial.
Moreover we have the exact sequence
 \[
 \xymatrix{
 0\ar[r] & \Osh_{X}(K_X)\ar[r] & \Osh_X\ar[r] & \Osh_D\ar[r]& 0,
 }
 \]
 which induces the exact sequence
 \[
 \xymatrix{
 \dots\ar[r] & H^i(X,\Osh_X)\ar[r] & H^i(D,\Osh_D)\ar[r] &H^{i+1}(X,\Osh_X(K_X))\ar[r] &\dots. 
 }
 \]
Since $h^i(X,\Osh_X)=0$ for $i>0$ and 
$h^{i+1}(X,\Osh_X(K_X))=h^{\dim(X)-i-1}(X,\Osh_X)$ for $i<\dim(X)-1$ 
by Serre-Grothendieck duality, we obtain the  
vanishing of $h^i(D,\Osh_D)$ for $0<i<\dim(D)$.
Thus $D$ is a Calabi-Yau variety.
\end{proof}

\begin{remark} In \cite[Theorem 2.25]{MP} the author states that 
 a general anticanonical hypersurface of a projective toric variety
  is a Calabi-Yau variety 
 if and only if the convex hull of the lattice points of its 
 anticanonical polytope is reflexive.
 This is not true in general, see Example \ref{q}.
\end{remark}

 Observe that a  good pair of polytopes 
 $\Delta_1\subset \Delta_2$  naturally produces a family of Calabi-Yau varieties 
in a $\qq$-Fano projective toric variety.
In fact, the toric variety $X:=X_{\Delta_2}$ defined by the normal fan to $\Delta_2$ 
is $\qq$-Fano with canonical singularities by Theorem \ref{qfano} and
$\Delta_2$ is its anticanonical polytope.
The subpolytope $\Delta_1\subseteq \Delta_2$ 
identifies the  family $\mathcal F_{\Delta_1,\Delta_2^*}$ of anticanonical 
hypersurfaces of $X$, whose general element is a Calabi-Yau variety 
by Proposition \ref{can}.
 By our definition of good pair we immediately have that if
$(\Delta_1,\Delta_2)$  is a good pair in $M_\qq$, then its
polar $(\Delta_2^*,\Delta_1^*)$ is a good pair in $N_\qq$.
This provides a duality between families of Calabi-Yau 
hypersurfaces of $\qq$-Fano toric varieties:
\[
\mathcal F_{\Delta_1,\Delta_2^*}\subseteq |-K_{X_{\Delta_2}}| \longleftrightarrow \mathcal F_{\Delta_2^*,\Delta_1}\subseteq |-K_{X_{\Delta_1^*}}|. 
\]

\begin{proposition}
If $\Delta_1=\Delta_2$, then the duality between 
good pairs is Batyrev duality.
\end{proposition}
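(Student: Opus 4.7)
The plan is to reduce the equality $\Delta_1=\Delta_2$ in the good pair condition to the reflexivity of the common polytope, and then observe that the induced duality is literally Batyrev's.

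First I would set $\Delta := \Delta_1 = \Delta_2$ and unwind the good pair definition. By definition, $(\Delta,\Delta)$ is a good pair precisely when $\Delta$ and $\Delta^*$ are both canonical polytopes. Since canonical polytopes are in particular lattice polytopes, this means $\Delta$ and $\Delta^*$ are both lattice polytopes containing the origin as an interior point, which is exactly the definition of reflexivity for $\Delta$.

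Conversely, if $\Delta$ is reflexive, then $\Delta$ and $\Delta^*$ are both lattice polytopes with the origin in their interior. By Lemma \ref{canon1} applied to $\Delta$ (resp.\ to $\Delta^*$, using $(\Delta^*)^* = \Delta$), the origin is the unique interior lattice point of $\Delta^*$ (resp.\ of $\Delta$). Hence both $\Delta$ and $\Delta^*$ are canonical, so $(\Delta,\Delta)$ is a good pair. Thus good pairs of the form $(\Delta,\Delta)$ are in bijection with reflexive polytopes.

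Next I would identify the family $\mathcal F_{\Delta,\Delta^*}$ with the full anticanonical linear system $|-K_{X_\Delta}|$. By Theorem \ref{qfano}, $X_\Delta = X_{(\Delta^*)^*}$ is Fano with $\Delta$ as its anticanonical polytope, and the Newton polytope of a general element of $|-K_{X_\Delta}|$ is precisely $\Delta$ (since the lattice points of $\Delta$ give a basis of the Riemann-Roch space of $-K_{X_\Delta}$ and the vertices appear with nonzero coefficients for a general section). Hence $\mathcal F_{\Delta,\Delta^*} = |-K_{X_\Delta}|$, and similarly $\mathcal F_{\Delta^*,\Delta} = |-K_{X_{\Delta^*}}|$. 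The general duality $\mathcal F_{\Delta_1,\Delta_2^*}\leftrightarrow\mathcal F_{\Delta_2^*,\Delta_1^*}$ therefore specializes to
\[
|-K_{X_\Delta}| \longleftrightarrow |-K_{X_{\Delta^*}}|,
\]
which is Batyrev's mirror construction for the reflexive polytope $\Delta$. No serious obstacle is expected: the argument is essentially a matching of definitions, with Lemma \ref{canon1} supplying the only nontrivial input.
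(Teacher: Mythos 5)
Your proof is correct and follows essentially the same route as the paper: unwind the good pair condition to see that $\Delta_1=\Delta_2$ forces $\Delta_2$ and $\Delta_2^*$ to be lattice polytopes, i.e.\ $\Delta_2$ is reflexive, invoke Theorem \ref{qfano} to get that $X_{\Delta_2}$ is Fano, and identify $\mathcal F_{\Delta_2,\Delta_2^*}$ with the anticanonical linear system. Your added converse (that every reflexive polytope yields a good pair $(\Delta,\Delta)$, via Lemma \ref{canon1}) is a harmless elaboration the paper leaves implicit.
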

\begin{proof}
 If $\Delta_1=\Delta_2$, 
then $\Delta_2$  and $\Delta_2^*$ 
are lattice polytopes, thus $\Delta_2$ is reflexive.
By Theorem \ref{qfano} this means that $X_{\Delta_2}$ 
is a Fano variety. Moreover $\mathcal F_{\Delta_2,\Delta_2^*}$ 
is the family of all anticanonical hypersurfaces of $X_{\Delta_2}$.
\end{proof}

\begin{example} Let $X=\pp^2$ and let $\Delta_2$ be its anticanonical polytope. 
Let $\Delta_1$ be the lattice polytope whose vertices are $(1,0),(0,1),(-1,0),(0,-1)$. 
The pair $(\Delta_1,\Delta_2)$ is a good pair and the toric variety $Y:=X_{\Delta_1^*}$ is $\pp^1\times\pp^1$. 
The family $\mathcal F_{\Delta_1,\Delta_2^*}$  in $X$ is given by $\alpha_1x_1^2x_2+\alpha_2x_2^2x_1+\alpha_3 x_1x_3^2+\alpha_4x_2x_3^2+\alpha_5 x_1x_2x_3$ where $\alpha_i\in \cc^*$, 
whereas the dual family $\mathcal F_{\Delta_2^*,\Delta_1}$ is defined by the equation $\beta_1y_1^2y_3y_4+\beta_2y_1y_2y_3^2+\beta_3y_2^2y_4^2+\beta_4y_1y_2y_3y_4,\ \beta_i\in\cc^*$ in homogeneous coordinates of $Y$ (in fact up to rescaling the variables 
one can put all coefficients $\beta_i$ to be equal to one).

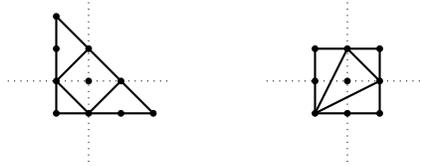
\begin{figure}[h]
\begin{tikzpicture}[scale=.43]
\filldraw [black] 
(0,0) circle (2.5pt) 
(1,0) circle (2.5pt)
(2,-1) circle (2.5pt) 
(1,-1) circle (2.5pt)
(0,-1) circle (2.5pt)
(-1,-1) circle (2.5pt)
(-1,0) circle (2.5pt)
(-1,1) circle (2.5pt)
(-1,2) circle (2.5pt)
(0,1) circle (2.5pt)
; 
\draw[dotted] (-2.5,0)--coordinate (x axis mid) (2.5,0);
\draw[dotted] (0,-2.5)--coordinate (y axis mid) (0,2.5);

\draw [thick] (-1,2)--(2,-1);
\draw [thick] (-1,2)--(-1,-1);
\draw [thick] (-1,-1)--(2,-1);

\draw [thick] (-1,0)--(0,1);
\draw [thick] (-1,0)--(0,-1);
\draw [thick] (0,-1)--(1,0);

\filldraw [black] 
(8,0) circle (2.5pt) 
(7,0) circle (2.5pt)
(7,-1) circle (2.5pt) 
(7,1) circle (2.5pt)
(8,-1) circle (2.5pt)
(8,1) circle (2.5pt)
(9,0) circle (2.5pt)
(9,1) circle (2.5pt)
(9,-1) circle (2.5pt)

; 
\draw[dotted] (5.5,0)--coordinate (x axis mid) (10.5,0);
\draw[dotted] (8,-2.5)--coordinate (y axis mid) (8,2.5);

\draw [thick] (7,1)--(9,1);
\draw [thick] (7,1)--(7,-1);
\draw [thick] (9,1)--(9,-1);
\draw [thick] (7,-1)--(9,-1);

\draw [thick] (8,1)--(7,-1);
\draw [thick] (9,0)--(7,-1);
\draw [thick] (8,1)--(9,0);

  \end{tikzpicture} 
\caption{The good pairs $(\Delta_1,\Delta_2)$ and $(\Delta_2^*,\Delta_1^*)$}
\end{figure}

\end{example}

\begin{remark} 
Families of Calabi-Yau varieties associated to 
pairs of nested reflexive polytopes  
already appeared in the literature in relation with 
the phenomenon of extremal transition \cite{Mo, Rossi, Fr2}.
An extremal transition between two families $\mathcal F$ 
and $\mathcal G$ of  Calabi-Yau manifolds occurs when there 
is a degeneration of $\mathcal F$ to a singular family 
$\mathcal F_0$ whose resolution is isomorphic to $\mathcal G$.
An inclusion of reflexive polytopes $\Delta_1\subset \Delta_2$  can produce 
such a transition since the family $\mathcal F_{\Delta_1,\Delta_2^*}$ is a degeneration 
of $\mathcal F_{\Delta_2, \Delta_2^*}$ (obtained putting some coefficients to zero) 
and is birational to the family  $\mathcal F_{\Delta_1,\Delta_1^*}$.
Such extremal transitions have been used to prove connectedness of 
moduli spaces of Calabi-Yau manifolds \cite{BKK, ACJM, CGGK, Kreuzer1997}.
\end{remark} 

\begin{proposition}\label{bir}
Let $(\Delta_1,\Delta_2)$ and $(\Delta_1',\Delta_2)$ 
be two good pairs. 
Then the dual families of $\mathcal F_{\Delta_1,\Delta_2^*}$ 
and $\mathcal F_{\Delta_1',\Delta_2^*}$ are birational.
\end{proposition}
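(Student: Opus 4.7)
The plan is to exhibit an explicit birational equivalence between the two dual families
\[
\mathcal F_{\Delta_2^*, \Delta_1} \subseteq |-K_{X_{\Delta_1^*}}| \quad \text{and} \quad \mathcal F_{\Delta_2^*, \Delta_1'} \subseteq |-K_{X_{\Delta_1'^*}}|
\]
by exploiting the fact that $X_{\Delta_1^*}$ and $X_{\Delta_1'^*}$ share a common open torus. Indeed, both ambient toric varieties have fans in $M_\qq$ and thus contain the algebraic torus $T_M = \Hom(N, \cc^*)$ as an open dense subset, even though the fans themselves are different.

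Next I would parametrize both families by the same space of Laurent polynomials. Since every member of either family has Newton polytope exactly $\Delta_2^*$, restricting its Cox-coordinate equation to $T_M$ produces a Laurent polynomial
\[
\phi = \sum_{n \in \Delta_2^* \cap N} c_n \chi^n
\]
whose Newton polytope is $\Delta_2^*$. Conversely, any such $\phi$ extends, after multiplying by a suitable torus-invariant monomial, to a section of the anticanonical bundle in either $X_{\Delta_1^*}$ or $X_{\Delta_1'^*}$ that defines a member of the appropriate family.

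The birational equivalence then comes from the identity map on $T_M$. For a fixed general $\phi$, let $D$ and $D'$ be the corresponding elements of the two families. Both $D$ and $D'$ contain $V(\phi) \cap T_M$ as a dense open subset, so the identity on $T_M$ extends to a birational map $D \dashrightarrow D'$. The main subtlety I anticipate is that $D$ or $D'$ may a priori acquire extra torus-invariant components, since the vertices of $\Delta_1$ or $\Delta_1'$ need not lie on the boundary of $\Delta_2$ and Proposition \ref{norm}(i) does not directly force irreducibility; but any such components are confined to the toric boundary of the respective ambient variety and do not obstruct the birational equivalence between the Calabi-Yau components, which are precisely the closures of $V(\phi) \cap T_M$ in the two toric compactifications.
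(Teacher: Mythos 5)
Your proposal is correct and takes essentially the same route as the paper: both dual ambient toric varieties are compactifications of the same torus, and both dual families restrict there to linear combinations of the characters indexed by $\Delta_2^*\cap N$, so the identity on the torus induces the birational map between general members. Your closing worry about extra toric-boundary components is harmless but unnecessary, since Proposition \ref{can} applied to the good pairs $(\Delta_2^*,\Delta_1^*)$ and $(\Delta_2^*,(\Delta_1')^*)$ already gives that the general dual members are normal anticanonical (hence connected, hence irreducible) hypersurfaces.
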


\begin{proof}
This follows from the fact 
that the toric varieties 
$X_{(\Delta_1')^*}$ and $X_{(\Delta_1)^*}$ 
are compactifications of the same torus 
$T_N={\rm Spec}\,\cc[N]$
and that the dual families in $T_N$ 
are both defined 
by linear combinations of the monomials 
corresponding to the points of the polytope 
$\Delta_2^*$.
\end{proof}

In the next section we will show 
that the duality between good pairs also includes 
Berglund-H\"ubsch-Krawitz duality. 
This implies that Proposition \ref{bir} 
can be seen as a generalization of \cite[Theorem 3.1]{Sh}. 
See also \cite{DFK} for a recent generalization 
of the proposition in a more general setting and 
in terms of derived equivalences.

\section{Berglund-H\"ubsch-Krawitz (BHK) duality}\label{bhk}

\subsection{The BHK construction}\label{sec-bhk}
We will recall a mirror construction due to the physicists 
Berglund and H\"ubsch \cite{BH} and later refined by Krawitz in \cite{K}.
Let $\pp(w)=\pp(w_1,\dots,w_n)$ be a normalized 
weighted projective space and let $W$ be 
a homogeneous polynomial of Delsarte type, 
i.e. having the same number of monomials and variables.
Up to rescaling the variables, we can assume that 
\begin{equation}\label{pot}
W(x_1,\dots,x_n)=\sum_{i=1}^n\prod_{j=1}^n x_j^{a_{ij}},
\end{equation}
so that $W$ is uniquely determined by its matrix of exponents $A=(a_{ij})$.
We will denote by $Y_W$ the hypersurface defined by $W$ in $\pp(w)$ and
we will assume that
\begin{enumerate}
\item $A$ is invertible over $\qq$,
\item $Y_W$ is quasismooth,
\item $\deg(W)=\sum_{i=1}^n w_i$  ({\em Calabi-Yau condition}). 
\end{enumerate}
 The assumptions (ii) and (iii) imply that $Y_W$ 
 is a Calabi-Yau variety by Proposition \ref{prop-canonicalsing} and \cite[Lemma 1.11]{CG}.
 \begin{remark}
By the proof of \cite[Lemma 2]{Sk} the condition of quasismoothness 
implies that the matrix $A$ is invertible over $\qq$. Thus condition 
(i) in the above construction 
is redundant. 
Moreover, it can be easily proved that 
asking $A$ to be invertible over $\qq$ is equivalent 
to say that the convex hull of the elements $u_1,\dots,u_n\in M$ 
corresponding to the monomials of $W$ is a simplex in $M_\rr$.
\end{remark}
 
 If we now consider the transposed matrix of $A$, 
 this defines in the same way a Delsarte type polynomial $W^*$.
 A set of weights $w^*=(w_1^*\dots, w_n^*)$ 
 which makes $W^*$ homogeneous is given by 
 the smallest integer multiple of the vector
 \[
 q^*=(A^T)^{-1}\cdot {\bf 1},
 \]
 where $\bf 1$ denotes the column vector with all entries equal to $1$. 
 By the quasismoothness assumption it follows that 
$w^*$ can be chosen with all positive entries (see Remark \ref{pos}).
 Thus $W^*$ defines a hypersurface $Y_{W^*}$ in $\pp(w^*)$.
 By \cite[Theorem 1]{kreuzerskarke} $W^*$ is still quasismooth and an easy computation shows 
 that it satisfies the Calabi-Yau condition in $\pp(w^*)$.
 Thus $Y_{W^*}$ is a Calabi-Yau variety.
 The Berglund-H\"ubsch-Krawitz construction gives a duality
\[
Y_W/\tilde G \longleftrightarrow Y_{W^*}/\tilde G^*,
\]
where $\tilde G$ denotes a quotient group $G/J$, 
with $J\subseteq {\rm SL}_n(\cc)$ the subgroup of diagonal automorphisms 
inducing the identity on $\pp(w)$ 
and $G$ a subgroup of diagonal automorphisms in ${\rm SL}_n(\cc)$ 
containing $J$ and  acting trivially on $W$, i.e.
\[
W(g(x))=W(x),\ \forall g\in G.
\]
The transposed group $\tilde G^*$ is defined as $G^*/J^*$,
where $J^*$ is the analogous of $J$ for $\pp(w^*)$ and $G^*$ is defined by
\begin{equation}\label{Gstar1}
G^*:=\left\{\prod_{j=1}^{n}(\rho_j^*)^{\alpha_j}\,|\,\prod_{j=1}^n x_j^{\alpha_j} \,\mbox{is }\, G\mbox{-invariant}\right\},
\end{equation}
where $\rho_j^*:={\rm diag}(\exp(2\pi ia^{j1}),\dots,\exp(2\pi ia^{jn}))$ and $a^{ji}$ are the entries of $A^{-1}$.
Several equivalent definitions for the transposed group can be found in \cite[\S 3]{ABS}.
The groups $\tilde G$ and $\tilde G^*$ both act simplectically \cite[Proposition 2.3]{ABS},
thus $Y_W/\tilde G$ and $Y_{W^*}/\tilde G^*$ are both Calabi-Yau varieties. 
In fact we will prove in Proposition \ref{cybh} that they are anticanonical hypersurfaces 
of the $\qq$-Fano toric varieties $\pp(w)/\tilde G$ and $\pp(w^*)/\tilde G^*$.

In \cite[Theorem 2]{CR} Chiodo and Ruan proved that $Y_W/\tilde G$ and $Y_{W^*}/\tilde G^*$ 
have symmetric Hodge diamonds for the Chen-Ruan orbifold cohomology. 

\begin{remark}\label{pos} 
Observe that, by the above definition, we have that
 \[
 A^Tq^*={\bf 1}\Longleftrightarrow \sum_{i=1}^nq_i^*P^T(u_i)=0 \Longleftrightarrow \sum_{i=1}^nq_i^*u_i=0,
 \]
 where $u_1,\dots, u_n\in M$ are 
 the points corresponding to the monomials of $W$,
 i.e. $P^T(u_i)+{\bf 1}$ is the $i$-th row of $A$.
 Moreover
 \[
 \sum_{i=1}^{n} q_i^*={\bf 1}^T(A^T)^{-1}{\bf 1}={\bf 1}\,A^{-1}{\bf 1}=1.
 \]
 Thus the entries of the vector $q^*$ are the barycentric 
 coordinates of the origin in the simplex with vertices $u_1,\dots,u_n$.
 In particular all the entries of $q^*$ are positive if and only if the origin 
 lies in the interior of the simplex.
Since $X_W$ is quasismooth, by \cite[Lemma 2]{Sk} the simplex contains the origin in its interior.
\end{remark}

\subsection{The generalized BHK construction}\label{sec-bhktoric}

We now describe a natural generalization of the BHK construction 
where weighted projective spaces are replaced by their higher class group rank analogs: 
$\qq$-Fano toric varieties with torsion-free class group.

\begin{construction}\label{construction}
Let $X=X_{\Sigma}$ be a $\qq$-Fano toric variety  with torsion free class group.
Denote by $n_1, \ldots, n_r$ the minimal generators of the rays of 
$\Sigma$ and by $\Delta_2$ the anticanonical polytope of $X$. 
Let $\Delta_1 \subseteq \Delta_2$ be a lattice polytope containing 
the origin in its interior and let us denote by $u_1,\ldots, u_s$ its vertices.

Note that by Corollary \ref{canon2} $(\Delta_1, \Delta_2)$ is a good pair and hence  $u_1,\ldots, u_s$ are primitive lattice vectors.
Moreover $n_1, \ldots, n_r$ generate the whole lattice $N$ since $\Cl(X)$ is torsion-free.
Instead of considering a single polynomial $W$ 
(made unique by a rescaling of the variables) as in the original construction, 
we consider a family of polynomials of the form 
\[
W(x_1,\ldots, x_r) = \sum_{i=1}^{s} \alpha_i \prod_{j=1}^{r} x_j^{a_{ij}}, \quad \alpha_i \in \cc^*,
\]
uniquely determined by the matrix 
\[
A=(a_{ij})_{\substack{i=1,\ldots,s \\ j=1,\ldots,r}} \quad \text{ with } a_{ij}=\langle u_i,n_j \rangle +1.
\] 

By construction $W$ is homogeneous of degree $[-K_X]$ and the equation $W=0$ defines a hypersurface 
$Y_W$ of $X$. 
Observe that, by the proof of \cite[Lemma 2]{Sk},  the quasismoothness 
condition for $W$ implies that its Newton polytope contains the origin in its interior.

An important fact to notice is that by the hypotheses 
made on $u_1,\ldots, u_s$, the matrix $A$ determines the variety $X$.

\begin{lemma}\label{recoverX}
Since $\Delta_1$ is a lattice polytope containing the origin in its interior, it is possible to recover the toric variety $X$ from the matrix $A$.
\end{lemma}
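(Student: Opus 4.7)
The plan is to reconstruct from the matrix $A$ the combinatorial data $(N, n_1, \ldots, n_r)$ defining the fan $\Sigma$, from which $X = X_\Sigma$ follows. The first step is to subtract the all-ones matrix to pass from $A$ to the matrix $B$ whose $(i,j)$ entry is $\langle u_i, n_j\rangle$. By the definition of $P^T$, the $i$-th row of $B$ is precisely the image $P^T(u_i) \in \zz^r$, so the rows of $B$ give $s$ explicit vectors in $\zz^r$ lying in the sublattice $P^T(M)$.

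Next I would recover the lattice $M$ itself as a sublattice of $\zz^r$. Because $\Cl(X)$ is assumed torsion free, the exact sequence $0 \to M \xrightarrow{P^T} \zz^r \to K \to 0$ has $K$ free, so $P^T(M) = \ker Q$ is a saturated sublattice of $\zz^r$. On the other hand, since $\Delta_1$ contains the origin in its interior, its vertices $u_1,\ldots,u_s$ span $M_\qq$, so the sublattice they generate has finite index in $M$; applying $P^T$, the row span $L$ of $B$ has finite index in $P^T(M)$. Hence the saturation of $L$ in $\zz^r$ is exactly $P^T(M)$, and we may identify this saturated sublattice with $M$ via $P^T$.

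With $M \subset \zz^r$ in hand, each $n_j \in N$ is recovered by the tautological identity that the $j$-th coordinate of $P^T(u)$ is $\langle u, n_j\rangle$: restricting the $j$-th coordinate function on $\zz^r$ to our copy of $M$ gives a linear functional in $\Hom(M,\zz) = N$, which is $n_j$. This determines the lattice $N$ together with the full list of ray generators. Finally, since $X$ is $\qq$-Fano with canonical singularities, its fan $\Sigma$ is the normal fan of its anticanonical polytope $\Delta_2$, equivalently the fan of cones over the faces of $\Delta_2^*$; and by Theorem \ref{qfano} the vertices of $\Delta_2^*$ are precisely the primitive generators $n_1,\ldots,n_r$. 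Thus setting $\Delta_2^* := \Conv(n_1,\ldots,n_r)$ and taking the fan of cones over its faces recovers $\Sigma$, hence $X$.

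The only delicate point, which I expect to be the main obstacle, is the saturation step: one must exploit both the torsion-free hypothesis on $\Cl(X)$ (to know that $P^T(M)$ is saturated in $\zz^r$) and the interior-point hypothesis on $\Delta_1$ (to know that the $u_i$ generate a finite-index sublattice of $M$); together they give $L^{\mathrm{sat}} = P^T(M)$. Everything else is formal transpose-duality and the description of the fan of a $\qq$-Fano toric variety with canonical singularities in terms of its anticanonical polytope.
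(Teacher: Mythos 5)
Your proof is correct, and it reaches the same endpoint as the paper (recover $M$ as a sublattice of $\zz^r$, read off the $n_j$, then take the face fan of $\Conv(n_1,\dots,n_r)$) by a genuinely different and arguably cleaner route. The paper selects $n$ linearly independent rows of $A-{\bf 1}$ to form a square matrix $P'=UP$, builds the exact sequence $0\to M'\to\zz^r\to K'\to 0$, and uses the torsion-freeness of $K=\Cl(X)$ to split off the torsion part $H\cong M'/M$ of $K'$, recovering $M$ as $\ker(F\circ Q')$; this phrasing ties the lemma to the finite-quotient machinery ($X\to X/H$, Lemma \ref{quot}) used throughout the section. You instead saturate the full row lattice $L$ of $A-{\bf 1}$ inside $\zz^r$: since $\Delta_1$ is full-dimensional the $u_i$ span $M_\qq$, so $L$ has finite index in $P^T(M)$, and since $K$ is free $P^T(M)=\ker Q$ is saturated, whence $L^{\mathrm{sat}}=P^T(M)$. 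The two computations are equivalent (the saturation of the row lattice of the chosen submatrix is the same sublattice), but yours avoids both the choice of submatrix and the explicit torsion computation, and it isolates exactly where each hypothesis enters. One small caveat: you invoke ``canonical singularities'' in the last step, which is not among the hypotheses of Construction \ref{construction}; the identification of $\Sigma$ with the face fan of $\Delta_2^*=\Conv(n_1,\dots,n_r)$ needs only that $X$ is $\qq$-Fano (ampleness of $-K_X$ forces strict convexity of the support function), which is how the paper argues via Reid.
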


\begin{proof}
Since the polytopes $\Delta_1$ and $\Delta_2^*$ are full dimensional, 
we know that the matrix $A-{\bf 1} = (\langle u_i,n_j \rangle)_{i=1,\ldots,s, \ j=1,\ldots,r}$ has rank $n$, so that we may choose $n$ 
linearly independent rows $l_{i_1}, \ldots, l_{i_n}$ of it. 
These define a submatrix $P'$, equal to the product $UP$ 
where the rows of $U$ are the coefficients of $u_{i_1}, \ldots, u_{i_n}$ 
and $P$ is the matrix of the $P$-morphism of $X$ (see \S \ref{background}).   

The lattice $N'=U N$ is a sublattice of $N$ of index $|N/N'|=|\det(U)|$ 
and by \cite[Prop. 3.3.7]{CLS}, the group $K'$ in the short exact sequence  
\begin{equation}\label{seq'}
0 \longrightarrow (N')^\vee=M' \overset{(P')^T}{\longrightarrow} \zz^r \overset{Q'}{\longrightarrow} K' \longrightarrow 0,
\end{equation}
is isomorphic to the class group of the geometric quotient $X/H$ where $H=N/N'$.
Since $P'=UP$ we have $K' \simeq K \times H$ and we can factorize the sequence as 
\begin{equation}\label{longseq}
0 \longrightarrow  M \overset{P^T}{\longrightarrow} M' \overset{U^T}{\longrightarrow} \zz^r \overset{Q'}{\longrightarrow} K' \overset{F}{\longrightarrow} K \longrightarrow 0,
\end{equation}
where $M$ is a sublattice of $M'$ of index $|\det(U)|$ and the map $F : K'\simeq K \times H \to K$ is the first projection.
\smallskip

It follows that knowing just $A$, we can form a short exact 
sequence \eqref{seq'} by finding a suitable submatrix of $A-{\bf 1}$ of rank $n$. 
Then by computing the torsion part of $K'$ we can deduce the map $F$ of \eqref{longseq} 
and hence the sublattice $M=\ker(F \circ Q')$ of $M'$.
Up to a unimodular basis change, this gives $P^T$ 
and hence the minimal generators $n_1, \ldots, n_r$ of $\Sigma$.
This is enough to recover $X$ since it is $\qq$-Fano. 
Indeed by \cite[Prop. 4.3]{Reid83} the cones of the fan $\Sigma$ 
are the cones over the faces of $\Delta_2^*=\Conv(n_1, \ldots, n_r)$. 
\end{proof}
\medskip

Let us now turn to the groups of diagonal automorphisms of $X$ that stabilize $W$.
The stabilizer of the monomials of $W$ under the induced action of the big torus 
$\hat{T}\cong (\cc^*)^r$ on the ring $\cc[x_1, \ldots,x_r]$ is 
\[
\Aut(W)=
\left\{ (\lambda_1, \ldots, \lambda_r) \in \hat{T} \mid
 \ \prod_{j=1}^{r} \lambda_j^{a_{ij}} =1  ,\ \text{for } i=1,\ldots,s\right\}.
\]
Let $\Psi$ and $G_\Sigma=\ker(\Psi)$ as defined in section 1.
The group $\Aut(W)$ does not contain $G_\Sigma$ in general, 
but if we consider the special linear parts  $J_\Sigma=G_\Sigma \cap \Sl_r(\cc)$ 
and $\Sl(W)=\Aut(W) \cap \Sl_r(\cc)$, we have  $J_\Sigma \subset \Sl(W)$.
Indeed, for all $(\lambda_1, \ldots, \lambda_r) \in \hat{T} \cap \Sl_r(\cc)$ 
we have $\prod_{j=1}^{r} \lambda_j =1$ 
and since $a_{ij}=\langle u_i,n_j \rangle +1$ we have
\[
\Sl(W)=\{\lambda\in \hat T\cap \Sl_r(\cc): \Psi(\lambda)(u)=1,\ \forall u\in M_W\},
\]
where $M_W$ is the sublattice of $M$ generated by $u_1, \ldots, u_s$. 
Moreover
 \[
 J_{\Sigma}=\{\lambda\in \hat T\cap \Sl_r(\cc): \Psi(\lambda)(u)=1,\ \forall u\in M\}.
 \]
Note that contrary to the original BHK construction, the group $\Sl(W)$ here can be infinite. 
But when this is the case, then $J_\Sigma$ is infinite as well and the quotient is finite.
 
\begin{lemma}\label{iso}
The homomorphism $\Psi$ defines a perfect pairing
\[
 \bar\Psi:\Sl(W)/J_{\Sigma}\times M/M_W\to \cc^*,\ ([\lambda],[u])\mapsto \Psi(\lambda)(u). 
\]
In particular $\Sl(W)/ J_\Sigma$ is isomorphic to $M/M_W$.
\end{lemma}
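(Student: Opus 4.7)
The strategy is to realize both factors of the pairing as dual character groups of diagonalizable subgroups of $\hat T$, and to recognize $\bar\Psi$ as the canonical evaluation pairing between a finite abelian group and its $\Hom$-dual, which is automatically perfect. First I would check that $\bar\Psi$ is well-defined on the quotients: if $\lambda\in J_\Sigma$, then $\Psi(\lambda)$ is trivial so $\Psi(\lambda)(u)=1$ for every $u$; if $u\in M_W$, then $\Psi(\lambda)(u)=1$ for every $\lambda\in \Sl(W)$ by the alternative description of $\Sl(W)$ recalled just before the statement.

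Next I would compute the relevant character groups. The identification $\hat T\cong \Hom(\zz^r,\cc^*)$ restricts to $\hat T\cap \Sl_r(\cc)\cong \Hom(\zz^r/\zz\mathbf{1},\cc^*)$. Writing $a_i=P^T(u_i)+\mathbf{1}$ and $L_A=\sum_i\zz a_i$, one checks $L_A+\zz\mathbf{1}=P^T(M_W)+\zz\mathbf{1}$ by adding and subtracting $\mathbf{1}$, and taking annihilators inside $\hat T\cap \Sl_r(\cc)$ gives
\[
\Sl(W)\cong \Hom\bigl(\zz^r/(P^T(M_W)+\zz\mathbf{1}),\cc^*\bigr),\qquad J_\Sigma\cong \Hom\bigl(\zz^r/(P^T(M)+\zz\mathbf{1}),\cc^*\bigr).
\]
Dualising the natural surjection between the two quotients produces
\[
\Sl(W)/J_\Sigma\cong \Hom\bigl((P^T(M)+\zz\mathbf{1})/(P^T(M_W)+\zz\mathbf{1}),\,\cc^*\bigr).
\]

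The crux is then the identification $(P^T(M)+\zz\mathbf{1})/(P^T(M_W)+\zz\mathbf{1})\cong M/M_W$, which reduces to the splitting $P^T(M)\cap \zz\mathbf{1}=\{0\}$. Indeed, if $P^T(w)=k\mathbf{1}$ then $kQ(\mathbf{1})=Q(P^T(w))=0$ in $\Cl(X)$; since $\Cl(X)$ is torsion free by assumption, either $k=0$ or $Q(\mathbf{1})=[-K_X]=0$, the latter contradicting the ampleness of $-K_X$ on the $\qq$-Fano variety $X$. Thus $k=0$ and the injectivity of $P^T$ forces $w=0$. Both lattice sums then decompose as internal direct sums with $\zz\mathbf{1}$, and the quotient collapses to $P^T(M)/P^T(M_W)\cong M/M_W$. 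Since $\Delta_1$ is full dimensional (it contains the origin in its interior), its vertices span $M_\qq$ and $M/M_W$ is a finite abelian group, hence canonically isomorphic to its $\Hom$-dual. Unwinding the identifications, the induced character pairing is exactly $([\lambda],[u])\mapsto \prod_j\lambda_j^{\langle u,n_j\rangle}=\Psi(\lambda)(u)$, so $\bar\Psi$ coincides with this canonical pairing and is therefore perfect, yielding in particular the isomorphism $\Sl(W)/J_\Sigma\cong M/M_W$. The main obstacle is precisely the splitting $P^T(M)\cap \zz\mathbf{1}=\{0\}$: this is the only point where the torsion-freeness of $\Cl(X)$ and the non-triviality of $[-K_X]$ are essentially used.
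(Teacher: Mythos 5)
Your argument is correct, but it follows a genuinely different route from the paper's. The paper proves injectivity of the induced map $\Sl(W)/J_\Sigma\to\Hom(M/M_W,\cc^*)$ directly from the definitions and then establishes surjectivity by an explicit construction: it chooses a Smith normal form basis $(k_1u_1,\dots,k_nu_n)$ of $M_W$ adapted to a basis of $M$, takes the dual basis $(m_1,\dots,m_n)$ of $M_W^\vee$, writes $m_1=\sum_j w_jn_j$ with $w_j\in\qq$, and exponentiates to produce by hand an element of $\Sl(W)$ mapping to a prescribed generator of the character group. You instead recast everything in terms of character groups of quotients of $\zz^r$: you identify $\Sl(W)$ and $J_\Sigma$ as $\Hom\bigl(\zz^r/(P^T(M_W)+\zz\mathbf{1}),\cc^*\bigr)$ and $\Hom\bigl(\zz^r/(P^T(M)+\zz\mathbf{1}),\cc^*\bigr)$, apply the exact functor $\Hom(-,\cc^*)$ to the inclusion of the two sublattices, and reduce the whole lemma to the single splitting $P^T(M)\cap\zz\mathbf{1}=\{0\}$, which you correctly deduce from the torsion-freeness of $\Cl(X)$ together with $[-K_X]=Q(\mathbf{1})\neq 0$ (ampleness). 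Perfectness then comes for free from Pontryagin duality for the finite group $M/M_W$, finiteness following from $\Delta_1$ being full dimensional. Your approach buys a cleaner structural picture that isolates exactly where the standing hypotheses (torsion-free class group, $\qq$-Fano) are used, while the paper's approach is more elementary and exhibits explicit preimages; both are complete proofs of the statement.
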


\begin{proof}
The homomorphism $\bar\Psi$ is clearly 
well-defined and it induces an injective 
homomorphism from $\Sl(W)/ J_\Sigma$ to 
${\rm Hom}(M/M_W,\cc^*)$ by definition of $J_{\Sigma}$.
Let $(u_1, \ldots, u_n)$ be a basis of $M$ such that there exist 
positive integers $k_1, \ldots, k_n \in \nn$ 
with $(k_1 u_1, \ldots, k_n u_n)$ a basis of $M_W$.
We have $M/M_W \simeq \bigoplus_{i=1}^{n} \zz/k_i\zz$. 
To prove the surjectivity we will prove that 
there exists an element of $\Sl(W)$ whose image is 
the homomorphism sending $u_1$ to $e^{2\pi i/{k_1}}$ 
and $u_i$ to $1$ for $i>1$. 
Consider the dual basis of $(k_1 u_1, \ldots, k_n u_n)$. 
This is a $\zz$-basis of $M_W^\vee$, let us denote it by $(m_1, \ldots, m_n)$. 

Now pick a $r$-uple of rational numbers $w_1, \ldots, w_r \in \qq$ 
such that $\sum_{j=1}^{r} w_j n_j = m_1$ and write $e^{i 2\pi w}=(e^{i 2\pi w_1}, \ldots, e^{2\pi i w_r})$. 
Since $m_1$ is an element of $M_W^\vee$, we have 
$$
\sum_{j=1}^{r} w_j \langle n_j, u \rangle = \langle m_1, u \rangle \in \zz \quad \text{ for all } u \in M_W,
$$ 
which implies that $e^{2\pi i w} \in \Sl(W)$. 
It remains to observe that 
$$
e^{2\pi i \sum_{j=1}^{r} w_j \langle u_1,n_j \rangle}=e^{2\pi i/k_1},\quad e^{2i \pi \sum_{j=1}^{r} w_j \langle u_i,n_j \rangle}=1 \text{ for } i>1.
$$
A similar argument holds for any $u_i$, proving the surjectivity.
\end{proof}

We now choose a subgroup $G$ of $\Sl(W)$ containing $J_\Sigma$, 
or equivalently a subgroup $\tilde{G}=G/ J_\Sigma$ of $\Sl(W)/ J_\Sigma$.
Observe that by Lemma \ref{iso} there exists a unique lattice $M_G$ 
satisfying $M_W \subseteq M_G \subseteq M$ such that 
\begin{equation}\label{G2}
G=\left\{ \lambda \in \hat{T} \cap \Sl_r(\cc) \mid \Psi(\lambda)(u)=1 ,\ \forall \ u \in M_G \right\}.
\end{equation}
We thus define an orbifold $Y_{W,G}=Y_W/\tilde{G}$, 
the family of which we denote by $\F(A,\tilde G)$. 

The following generalizes \cite[Lemma 2.3]{Sh}.

\begin{proposition}\label{prim}
Let $X,G$ be as in Construction \ref{construction}.
Then the quotient map $X\to X/G$ 
induces the identity $\mathcal R(X/G)\to \mathcal R(X)$
and $\Cl(X/G)\cong \Cl(X)\oplus G$.
\end{proposition}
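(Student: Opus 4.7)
The plan is to apply Lemma \ref{quot} directly. The first step is to identify the correct lattice enlargement $N \subset N'$ describing the quotient $X \to X/G$. Since $J_\Sigma \subseteq G_\Sigma = \ker \Psi$ acts trivially on $X$, only the effective quotient by the image $\Psi(G) \subset T$ matters. By Lemma \ref{iso} we have $G/J_\Sigma \cong M/M_G$, and unwinding the pairing one sees that $\Psi(G) = \Hom(M/M_G, \cc^*)$. Standard toric theory then identifies
\[
X/G \;=\; X_{\Sigma,N'}, \qquad N' \;=\; M_G^\vee \;=\; \{n \in N_\qq : \langle n, u\rangle \in \zz \text{ for all } u \in M_G\},
\]
and the cokernel $N'/N$ is isomorphic to $M/M_G \cong G/J_\Sigma$.

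The second step, which is the main (though not deep) obstacle, is to verify the primitivity hypothesis of Lemma \ref{quot}: each ray generator $n_i$ must be primitive in $N'$. Since $M_W \subseteq M_G$ we have $N' = M_G^\vee \subseteq M_W^\vee$, so it suffices to prove primitivity of $n_i$ in the larger lattice $M_W^\vee$ (primitivity descends to sublattices). By the standard duality characterization, $n_i$ is primitive in $M_W^\vee$ if and only if $\gcd_{j=1,\dots,s}\langle u_j, n_i\rangle = 1$.

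The key observation for this gcd computation is the interaction between two conditions from Construction \ref{construction}. On the one hand, $\Delta_1 \subseteq \Delta_2 = \{m : \langle m, n_i\rangle \geq -1\}$ forces $\langle u_j, n_i\rangle \geq -1$ for all $i,j$, that is, $a_{ij} \geq 0$. On the other hand, the origin is an interior point of $\Delta_1$, so for every ray direction $n_i$ there must exist a vertex $u_j$ with $\langle u_j, n_i\rangle < 0$ (otherwise $\Delta_1$ would lie in a closed half-space bounded by a hyperplane through the origin). The only integer strictly less than $0$ and at least $-1$ is $-1$, so some column of $A-{\bf 1}$ contains the entry $-1$ for each $i$, forcing the gcd to equal $1$.

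With this primitivity verified and $\Cl(X)$ torsion-free by the hypothesis on $X$, Lemma \ref{quot} applies and yields simultaneously that $\pi^*\colon \R(X/G) \to \R(X)$ may be taken to be the identity and that $\Cl(X/G) \cong \Cl(X) \oplus (N'/N) \cong \Cl(X) \oplus G$, completing the proof.
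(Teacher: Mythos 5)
Your proof is correct and follows essentially the same route as the paper: reduce to Lemma \ref{quot} by checking that the $n_i$ remain primitive in the overlattice, which it suffices to verify in $M_W^\vee\supseteq M_G^\vee$. Your justification of the primitivity step (the pairings $\langle u_j,n_i\rangle$ all lie in $\{-1,0,1,\dots\}$ and at least one equals $-1$ because the origin is interior to $\Delta_1$, so their gcd is $1$) is in fact a slightly cleaner rendering of the divisibility argument the paper uses, which phrases the same point in terms of some exponent $a_{ij}$ being zero.
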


\begin{proof}
By Lemma \ref{quot} it is enough 
to prove that the quotient map 
is induced by a lattice monomorphism 
$N\to N_G$ such that 
the primitive generators $n_1,\dots,n_r\in N$
of the rays of the fan of $X$ remain primitive in $N_G$.
In fact, assume that $n_i=kn$ for some $n\in N_G$ 
and some integer $k$. 
Since $N_G\subset M_W^\vee$, then 
$a_{ij}-1=\langle n_i,u_j\rangle=k\langle n,u_j\rangle\in k\zz$ 
for all $j$.
Since a monomial in $W$ can not contain all the variables 
(otherwise it would correspond to $0\in M$),
this gives $k=1$.
\end{proof}


\begin{proposition}\label{cybh}
The orbifolds $Y_{W,G}$ are Calabi-Yau varieties for general $\alpha_i\in \cc^*$.
\end{proposition}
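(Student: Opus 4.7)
The plan is to reduce the statement to Theorem \ref{cy} applied in the toric quotient $X/G$, with the good pair $(\Delta_1,\Delta_2)$ transported to the coarser lattice $M_G$.

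First I would note that by hypothesis $\Delta_1$ is a lattice polytope containing the origin in its interior, and $\Delta_2$ is the anticanonical polytope of the $\qq$-Fano toric variety $X$ (with canonical singularities), so that $\Delta_2^*$ is a lattice polytope with $0$ as interior point. By Corollary \ref{canon2}, $(\Delta_1,\Delta_2)$ is a good pair in $M_\qq$. Since $G\subseteq \Sl(W)$, the lattice $M_G$ from \eqref{G2} satisfies $M_W\subseteq M_G\subseteq M$, and in particular the vertices $u_1,\ldots,u_s$ of $\Delta_1$ lie in $M_G$. Lemma \ref{gp} then yields that $(\Delta_1,\Delta_2)$ is still a good pair when viewed in $(M_G)_\qq$.

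Next I would describe the quotient. By Proposition \ref{prim}, the morphism $X\to X/G$ is induced by a lattice monomorphism $N\hookrightarrow N_G$ with cokernel $G$, the rays $n_1,\ldots,n_r$ remain primitive in $N_G$, and $\R(X/G)\cong \R(X)=\cc[x_1,\ldots,x_r]$. Thus $X/G$ is the toric variety whose fan is $\Sigma$ viewed in $N_G$; its anticanonical polytope in $M_G$ is again $\Delta_2$. Since $\Delta_2^*$ is canonical in $N_G$ (by the good pair in $M_G$), Theorem \ref{qfano} shows that $X/G$ is $\qq$-Fano with canonical singularities. Because $W$ is $G$-invariant, $Y_W\subset X$ is $G$-stable and descends to a hypersurface $Y_W/\tilde G\subset X/G$ cut out by $W$, viewed as a section of $-K_{X/G}$; its Newton polytope in $M_G$ is $\Conv(u_1,\ldots,u_s)=\Delta_1$. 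Hence $Y_{W,G}$ is a member of $\F_{\Delta_1,\Delta_2^*}$ in $X/G$.

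With this setup, Theorem \ref{cy} applied to $X/G$ and the polytope $\Delta_1\subseteq \Delta_2$ in $M_G$ gives that the general element of $\F_{\Delta_1,\Delta_2^*}$ in $X/G$ is Calabi-Yau, so for general $\alpha_i\in\cc^*$ the hypersurface $Y_{W,G}$ is Calabi-Yau. The only delicate point is that $W$ has support equal to the vertex set $\{u_1,\ldots,u_s\}$ rather than the full set $\Delta_1\cap M_G$, so one must verify that the proofs of Propositions \ref{norm} and \ref{can} remain valid in this restricted setting. I expect this to be the main obstacle, but it should follow by direct inspection: the well-formedness criterion of Lemma \ref{hyp} and Proposition \ref{norm}(ii) depends only on the Newton polytope; the sufficient normality condition of Proposition \ref{norm}(iii) requires $n_i+n_j\notin \mathrm{int}(\Delta_1^*)$, which holds automatically because $\Delta_1^*$ has $0$ as unique interior lattice point (Lemma \ref{canon1}) and $n_i+n_j\neq 0$ whenever $n_i,n_j$ lie on a common facet of $\Theta^*$; and the discrepancy formula \eqref{discr} in the proof of Proposition \ref{can} depends only on $\min_{u\in\Delta_1\cap M_G}\langle u,n\rangle$, which by linearity equals $\min_k\langle u_k,n\rangle$, i.e.\ the same value one obtains from $\mathrm{supp}(W)$. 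Thus $Y_{W,G}$ is well-formed, normal, and has canonical singularities; the exact sequence argument in the proof of Theorem \ref{cy} then yields $K_{Y_{W,G}}\cong\Osh$ and the required Hodge-theoretic vanishings, completing the proof.
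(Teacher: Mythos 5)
Your proposal is correct and follows essentially the same route as the paper: reduce to Theorem \ref{cy} on the quotient $X/G$ via Proposition \ref{prim}, Corollary \ref{canon2} and Lemma \ref{gp}, identifying $Y_{W,G}$ as a general member of $\mathcal F_{\Delta_1,\Delta_2^*}$ with Newton polytope $\Delta_1$. The only difference is that where you re-verify by hand that the proofs of Propositions \ref{norm} and \ref{can} survive when the support of $W$ is just the vertex set of $\Delta_1$, the paper disposes of this point by appealing to its convention in Remark \ref{genhyp} that a general element of $\mathcal F_{\Delta,\Theta^*}$ need not have support equal to $\Delta\cap M$.
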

\begin{proof}
By construction $Y_W$ is an anticanonical hypersurface in $X$. 
The quotient $Y_W\to Y_W/G$ is clearly induced by the quotient 
$X\to X/G$ induced by the inclusion $N\subseteq N_G=M_G^{\vee}$. 
Let $\Delta_2\subset M_\qq$ be the anticanonical polytope of $X$ 
and $\Delta_1$ be the Newton polytope of $Y_W$. 
The anticanonical polytope of $X/G$ is equal to
$\Delta_2\subset (M_G)_\qq=M_\qq$ by 
Proposition \ref{prim}.
The monomials in the defining 
equation of $Y_W/G$ are the lattice points 
$u_i\in M_W\subset M_G$.
Thus, the Newton polytope of 
$Y_W/G$ is clearly $\Delta_1$.
Since $(\Delta_1,\Delta_2)$ is a good pair in $M_{\qq}$ by Corollary \ref{canon2}, 
then the same holds 
in $(M_G)_{\qq}$ by Lemma \ref{gp}.
By Theorem \ref{cy} and  Remark \ref{genhyp} the orbifold $Y_{W,G}$,
whose monomials in Cox coordinates correspond to the 
vertices of $\Delta_1$, are Calabi-Yau varieties.
\end{proof}

Let us now define the dual family $\F(A^T,\tilde G^*)$.
Let $X^*$ be the toric variety  whose fan is the collection 
of cones over the faces of $\Delta_1$, 
considered with respect to the lattice $M_W$ generated by $u_1, \ldots, u_s$.
 The transposed matrix $A^T$  of $A$ gives the family  
$$
W^*(y_1,\ldots, y_s) = \sum_{i=1}^{r} \beta_i \prod_{j=1}^{s} y_j^{a_{ji}}, \quad \beta_i \in \cc^*,
$$
where $y_1,\dots,y_s$ are the Cox coordinates of $X^*$.
Observe that by  Lemma \ref{recoverX} the toric variety $X^*$ can 
be recovered uniquely by the matrix $A^T$.

We now define the transposed of the group $G$.
If $M^\vee \subseteq M_G^\vee \subseteq M_W^\vee$ 
denote the dual lattices and $\hat{T}^\vee=(\cc^*)^s$ is an algebraic torus of dimension $s$, 
we define  
\begin{equation}\label{Gstar2}
G^*:=\left\{ \nu \in \hat{T}^\vee \cap \Sl_s(\cc) \mid \Psi^*(\nu)(n)=1,\ \text{for all} \ n \in M_G^\vee \right\},
\end{equation}
where 
\[
\Psi^*:\hat{T}^\vee\to {\rm Hom}(N,\cc),\quad  \nu\mapsto (n\mapsto \prod_{i=1}^s\nu_i^{\langle u_i,n\rangle}).
\]
  
Let us show that this generalizes the original definition of the transposed group given by Krawitz  \eqref{Gstar1} 
(see also \cite[Proposition 2.3.1]{Bor}).
 
\begin{lemma}\label{GTG*}
When the good pair $(\Delta_1, \Delta_2)$ is formed by simplices then the definitions \eqref{Gstar1} and \eqref{Gstar2} coincide.
\end{lemma}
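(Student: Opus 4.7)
The plan is to show that the two closed subgroups $G^*_{\mathrm{old}}$ (from \eqref{Gstar1}) and $G^*_{\mathrm{new}}$ (from \eqref{Gstar2}) of $\hat T^\vee = (\cc^*)^s$ have the same annihilator sublattice in $\zz^s$; since a closed subgroup of an algebraic torus is determined by the characters vanishing on it, this implies equality as subsets. The simplex hypothesis enters precisely to guarantee $s = r$ and that the matrix $A$ is invertible over $\qq$: since both $\Delta_1$ and $\Delta_2^*$ are simplices containing the origin in their interior, the uniqueness of the barycentric coordinates of $0$ with respect to $\{n_j\}$ (resp.\ $\{u_i\}$) translates into $Ap^* = \mathbf{1}$ (resp.\ $A^T q^* = \mathbf{1}$), and an elementary argument forces $A$ to have trivial kernel.

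The first step is a direct computation showing that for any character $\chi = (\chi_1,\dots,\chi_s) \in \zz^s$ and any $\alpha \in \zz^s$,
\[
\chi\Bigl(\prod_{j=1}^{s}(\rho_j^*)^{\alpha_j}\Bigr) = \exp\bigl(2\pi i\,\langle \alpha, A^{-1}\chi\rangle\bigr).
\]
Consequently $\chi$ vanishes on $G^*_{\mathrm{old}}$ if and only if $A^{-1}\chi$ pairs integrally with every element of the lattice $L_G \subset \zz^s$ of $G$-invariant exponents. Since the characters of $\hat T$ trivial on $G$ are the $\zz$-span of the defining ones --- the $\Sl_s$-character $\mathbf{1}$ together with $P^T(u)$ for $u \in M_G$ --- one has $L_G = \zz\mathbf{1} + P^T(M_G)$, so that
\[
L_G^\vee = \bigl\{v \in \qq^s : \textstyle\sum_j v_j \in \zz \ \text{and}\ \sum_j v_j n_j \in M_G^\vee\bigr\}.
\]
On the other hand, from \eqref{Gstar2} the annihilator lattice of $G^*_{\mathrm{new}}$ is directly $\zz\mathbf{1} + \pi^*(M_G^\vee)$, where $\pi^*(n) := (\langle u_i,n\rangle)_{i=1}^s$.

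The core (and essentially only nontrivial) step is to match these two annihilators via multiplication by $A$. Given $\chi \in \zz^s$, set $v := A^{-1}\chi \in \qq^s$; the identity $a_{ij} = \langle u_i,n_j\rangle + 1$ then yields $\chi_i = \langle u_i, n\rangle + m$ with $m := \sum_j v_j$ and $n := \sum_j v_j n_j$, i.e.\ $\chi = m\mathbf{1} + \pi^*(n)$. Hence $v \in L_G^\vee$ (equivalently $m \in \zz$ and $n \in M_G^\vee$) if and only if $\chi \in \zz\mathbf{1} + \pi^*(M_G^\vee)$, which concludes the proof. The main obstacle is verifying that the correspondence $v \leftrightarrow (m,n)$ is actually bijective, and this is where the full simplex hypothesis is decisive: the relation $\sum_j p^*_j = 1$ ensures that within each fiber of $v \mapsto \sum_j v_j n_j$ the equation $\sum_j v_j = m$ picks out a unique $v$.
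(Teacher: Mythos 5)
Your argument is correct. Its heart --- the identity $\chi=Av=(\sum_j v_j)\mathbf{1}+(\langle u_i,\textstyle\sum_j v_j n_j\rangle)_i$, which is just $a_{ij}=\langle u_i,n_j\rangle+1$ repackaged, combined with the barycentric relations for the origin in the two simplices to make the correspondence $v\leftrightarrow(m,n)$ bijective --- is the same computation the paper performs. The difference is one of packaging. The paper works directly with group elements $e^{2\pi i v}$ and reduces both definitions to the single condition $\sum_i v_iu_i\in M_G$, but to do so it first invokes the reformulation $G^*_1=\lbrace e^{2\pi i v}\in \Sl_{n+1}(\cc)\mid v^TAw\in\zz \text{ for all } e^{2\pi i w}\in G\rbrace$ from \cite[\S 3.5]{ABS}. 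You instead dualize: you compare the annihilator lattices in $\zz^s$ of the two (finite, hence closed) subgroups and match them via multiplication by $A$, showing $A(L_G^\vee)=\zz\mathbf{1}+\pi^*(M_G^\vee)$. This buys you a proof that starts from \eqref{Gstar1} as literally stated, at the cost of the routine facts that a closed subgroup of a torus is determined by its annihilator and that the characters trivial on $G$ are exactly $\zz\mathbf{1}+P^T(M_G)$. One small point you should make explicit: \eqref{Gstar1} ranges over honest monomials, i.e.\ exponent vectors $\alpha\in\zz_{\geq 0}^s$, so to identify $G^*_{\mathrm{old}}$ with the image of the full lattice $L_G$ of $G$-invariant Laurent exponents one must observe that the nonnegative $G$-invariant exponents generate $L_G$ as a group and that the image of this monoid in the finite group generated by the $\rho_j^*$ is automatically a subgroup; this is harmless but is exactly the kind of gap between ``set of monomials'' and ``lattice'' that your annihilator computation silently crosses.
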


\begin{proof}
The proof relies on the fact that when $r=s=n+1$ the diagonal groups 
preserving the polynomials $W$ and $W^*$ are finite, hence formed of $(n+1)$-uples of roots of unity. 
Let us write $e^{2\pi i v}=(e^{2\pi i v_0}, \ldots, e^{2\pi i v_{n}})$ for all $v \in \qq^{n+1}$.  
Given a subgroup $G$ with $J_{\Sigma}\subset G\subset \Sl(W)$ we will denote by 
$G_1^*$ its transposed as defined in \eqref{Gstar1} and $G_2^*$ its transposed 
as in \eqref{Gstar2}. 
It is proven in \cite[\S 3.5]{ABS} that  
$$
G^*_1:=\lbrace e^{2\pi i v} \in \hat{T}^\vee \cap \Sl_{n+1}(\cc) \mid v^T A w \in \zz \ \text{for all} \ e^{2\pi i w} \in G \rbrace.
$$
We first remark that for all $(v,w) \in \qq^{n+1} \times \qq^{n+1}$ we have
$$
e^{2\pi i v}, e^{2\pi i w} \in \Sl_{n+1}(\cc) \ \Rightarrow \ \sum_{i,j=0}^n v_i w_j  \in \zz,
$$
and that by \eqref{G2}, for all $w \in \qq^{n+1}$ such that $e^{2\pi i w} \in \Sl_{n+1}(\cc)$ we have 
$$
e^{2\pi i w} \in G 
\ \Leftrightarrow \ 
\forall \ u \in M_G,\ \sum_{j=0}^n \langle u,n_j \rangle w_j \in \zz  
\ \Leftrightarrow \ 
\sum_{j=0}^n w_j n_j \in M_G^\vee.   
$$
It follows that for all $v \in \qq^{n+1}$ such that $e^{2\pi i v} \in \Sl_{n+1}(\cc)$ we have
\begin{eqnarray*}
e^{2\pi i v} \in G^*_1 
& \Leftrightarrow &
\sum_{i,j=0}^n  v_i (\langle u_i,n_j \rangle +1) w_j \in \zz \ \text{for all} \ e^{2\pi i w} \in G   \\
  & \Leftrightarrow &
\langle \sum_{i=0}^n v_i u_i, \sum_{j=0}^n w_j n_j \rangle \in \zz \ \text{for all} \ e^{2\pi i w} \in G    \\
  & \Leftrightarrow &
\sum_{i=0}^n v_i u_i \in M_G.
\end{eqnarray*}
On the other hand, writing $\nu= e^{2\pi iv}$, definition \eqref{Gstar2} is equivalent to
$$
G^*_2:=\lbrace e^{2\pi i v} \in \hat{T}^\vee \cap \Sl_{n+1}(\cc) \mid 
\sum_{i=0}^n  v_i \langle u_i,n \rangle \in \zz \ \text{for all} \ n \in M_G^\vee \rbrace.
$$
Since for all $v \in \qq^{n+1}$ we have 
$$
\sum_{i=0}^n  v_i \langle u_i,n \rangle \in \zz \ \text{for all} \ n \in M_G^\vee 
\ \Leftrightarrow \ 
\sum_{i=0}^n v_i u_i \in M_G,   
$$
we get $G^*_1=G^*_2$ so that the two definitions are indeed equivalent.
\end{proof} 

\begin{remark}
In \cite{ABS} it is proved that $G^*$ is the orthogonal complement of $G$ 
with respect to the bilinear form
\[
\Sl(W)/J_W\times \Sl(W^*)/J_{W^*}\to \cc^*,\ (e^{2\pi i v},e^{2\pi i w})\mapsto e^{2\pi iv^TAw}.
\]
It can be easily proved that the same description can be given for the generalized 
Berglund-H\"ubsch-Krawitz construction.
\end{remark}

\begin{definition}
The {\em generalized Berglund-H\"ubsch-Krawitz dual family} of the family $\F(A,\tilde G)$
is the family $\F(A^T,\tilde {G}^*)$ formed by the orbifolds 
\[
Y_{W^*,\tilde{G}^*}=\lbrace W^*=0 \rbrace /\tilde{G}^*.
\]
\end{definition}
\end{construction}

We want to emphasize that the above construction is a direct generalization of the original BHK construction, 
simply replacing canonical simplices (with the quasismoothness condition) by arbitrary canonical polytopes.
It is quite striking that the transposition rule extends naturally to the case of a non square matrix, 
thanks to Cox construction of homogeneous coordinates on the ambient toric varieties.

We conclude with the proof of Theorem 2, which shows that the generalized BHK construction can be described 
as a duality of good pairs.

\begin{proof}[Proof of Theorem 2] 
Let $\Delta_2\subset M_{\qq}$ be the anticanonical polytope of $X$ 
and $\Delta_1\subset \Delta_2$ be the Newton polytope of $Y_W$.
We recall that  the anticanonical polytope of $X/\tilde G$ 
is the same polytope $\Delta_2\subset (M_G)_{\qq}$
and the Newton polytope of $Y_{W,G}$ is 
$\Delta_1\subset (M_G)_{\qq}$.
Thus $(\Delta_1,\Delta_2)$ is a good pair in $(M_G)_\qq$ as already explained
in the proof of Proposition \ref{cybh}.
We now consider the polar pair $(\Delta_2^*,\Delta_1^*)$ as a good pair 
in $(M_W^{\vee})_{\qq}$.
By construction $X^*$ is defined by the fan over the faces of $\Delta_1$ 
and the vertices of $\Delta_2^*$  correspond to the monomials of $W^*$.
Thus $\Delta_1^*$ is the anticanonical polytope of $X^*$ and $\Delta_2^*$
is the Newton polytope of $Y_{W^*}$.   
Looking at the definition of the transposed group $G^*$ one immediately sees 
that the pair $(\Delta_2^*,\Delta_1^*)$ seen in $(M_G^{\vee})_{\qq}$
 defines the toric variety $X^*/\tilde G^*$ and the hypersurface $Y_{W^*,G^*}$.
 \end{proof}

 \begin{example}\label{bv}
Let $X=\pp(5,5,4,4,2)$ and let $\Delta_1\subset M_\qq$ be the convex hull of the 
lattice points $u_1,\ldots,u_5\in M$ corresponding to the monomials
$$x_1^3x_2,\ x_2^4,\ x_3^5,\ x_4^5,\ x_5^{10}.$$ 
The polytope $\Delta_1$ is a simplex. We denote by $\Delta_2\subset M_\qq$ 
the anticanonical polytope of $X$, thus $(\Delta_1,\Delta_2)$ is a good pair and they both are reflexive.
Since $\Delta_1$ and $\Delta_2$ are simplices, then the diagonal group of automorphisms preserving 
\[
W=x_1^3x_2+x_2^4+x_3^5+x_4^5+x_5^{10}
\] 
is finite and $\Sl(W)/J_W\cong \zz/5\zz$.  
The polynomial $W$ is quasismooth since it satisfies the conditions of \cite[Theorem 1]{kreuzerskarke}. 
The  BHK construction  gives a dual  Calabi-Yau hypersurface in the toric variety $Y$ 
which is the quotient of $\pp(10,6,6,5,3)$ by the group of order five generated by $g=(1,e^{2\pi i\frac{ 1}{5}},e^{2\pi i\frac{4}{5}},1,1)$.
The hypersurface is  defined by 
the following  polynomial in the Cox coordinates of $Y$:
$$
W^*=y_1^3+y_1y_4^4+y_2^5+y_3^5+y_5^{10}.
$$ 
The threefold defined by $W$ in $X$ is a birational projective model for the Borcea-Voisin Calabi-Yau threefold obtained from the elliptic curve $E:=\{x_0^2+x_1^3x_2+x_2^4=0\}\subset\pp(2,1,1)$ 
and the K3 surface $S:=\{y_0^2+y_1^5+y_2^5+y_3^{10}=0\}\subset\pp(5,2,2,1)$ (see \cite[Proposition 4.4]{ABS2}).
\end{example}

\begin{example}\label{bv2}
Let $X=\pp(5,5,4,4,2)$ as in Example \ref{bv} and let
 $\Delta'_1\subset \Delta_2$ be the convex hull of the lattice points $u_1,\ldots,u_8\in M$ corresponding to the monomials
$$x_1^3x_2,\ x_2^4,\ x_3^5,\ x_4^5,\ x_5^{10},\ x_1^2x_5^5,\ x_1^2x_4^2x_5,\ x_1^2x_3^2x_5.$$
Observe that  $\Delta_1\subset\Delta'_1$ 
and $\Delta'_1$ is not a simplex. 
We have that $(\Delta_1',\Delta_2)$ is a good pair of reflexive polytopes and one can prove that the general 
element of $\mathcal F_{\Delta'_1,\Delta_2^*}$ is quasismooth. 
The lattice   generated by $u_1, \ldots, u_8$ is equal to $M$ so that the only 
possible choice for $\tilde G$ is the trivial group.
Thus starting from $\Delta'_1$ one obtains the family of Calabi-Yau hypersurfaces $\mathcal F(A',G)$
\[
\alpha_1x_1^3x_2+\alpha_2x_2^4+\alpha_3x_3^5+\alpha_4x_4^5+\alpha_5x_5^{10}+\alpha_6x_1^2x_5^5+\alpha_7x_1^2x_4^2x_5+\alpha_8 x_1^2x_3^2x_5=0.
\]
The generalized BHK construction gives the dual family $\mathcal F((A')^T, G^*)$ 
\[
 \beta_1y_1^3y_3^2y_4^2y_5^2+\beta_2 y_1y_2^4+\beta_3 y_3^5y_4y_5y_6^{10}+\beta_4 y_4^2y_7^5+\beta_5y_5^2y_8^5=0
\]
in the toric variety 
whose Cox ring is $\cc[y_1,\dots,y_8]$ with Class group isomorphic to $\zz^4$ and grading matrix
\[
\left(
\begin{matrix}
    1 &  1&  1&  0&  0&  0&  1&  1\\
   0&  5&  0&  0& 10&  1&  4&  0\\
    0&  5&  0&  5&  5&  1&  2&  2\\
    0&  7&  1&  4&  9&  1&  4&  2\\
    \end{matrix}
    \right).
\]
\end{example}

\section{About the stringy Hodge numbers of good pairs}    
The polar duality between good pairs provides 
a duality between families of Calabi-Yau hypersurfaces of $\qq$-Fano toric varieties. 
Of course a natural question arises:
\vspace{0.1cm}
 
\noindent {\em Question.}
Do the families $\mathcal F_{\Delta_1,\Delta_2^*}$ and 
$\mathcal F_{\Delta_2^*,\Delta_1}$ associated to a good pair $(\Delta_1,\Delta_2)$
satisfy the topological mirror test, i.e. 
\[
h^{p,q}_{st}(X)=h^{n-p,q}_{st}(X^*),\quad 0\leq p,q\leq n\ 
\]
for general $X\in \mathcal F_{\Delta_1,\Delta_2^*}$ and $X^*\in \mathcal F_{\Delta_2^*,\Delta_1}$?
 \vspace{0.1cm}
 
This is known to be true for anticanonical hypersurfaces of toric Fano varieties 
(i.e. when $\Delta_1=\Delta_2)$ by Batyrev and Borisov \cite{BaBo} 
and in the classical  Berglund-H\"ubsch case by Chiodo and Ruan \cite{CR} 
(i.e. when $\Delta_1,\Delta_2$ are simplices and quasismoothness holds)
in terms of Chen-Ruan orbifold cohomology.
Observe that  in the latter case orbifold Hodge numbers are known to be the same as the stringy
Hodge numbers (see \cite{Yasuda} and \cite{BM}). 
Unfortunately we are unable to give a complete answer at the moment.
Example \ref{fred}  shows that the answer is negative in general 
and Example \ref{bvhodge} leads us to think that the answer could be 
positive if quasismoothness holds.

We recall that in case the polytope is reflexive of dimension four Hodge numbers 
can be computed in terms of combinatorial properties of the polytope 
\cite[Corollary 4.5.1]{Ba}.

The following result allows to compute the stringy Hodge 
numbers for good pairs of four dimensional reflexive polytopes.
We recall that, given a reflexive polytope $\Delta$, 
an {\em MPCP resolution} of the toric variety 
$X_{\Delta}$ is defined by a complete, simplicial projective fan  
which is a subdivision of the normal fan $\Sigma_{\Delta}$ 
to $\Delta$ and whose rays are generated by nonzero lattice points of $\Delta^*$. 
In \cite{Fr} the author introduces the more general notion of 
{\em $\Delta$-maximal fan}, which has the same definition of 
an MPCP resolution except for the fact that it is not necessarily projective 
and it does not need to refine $\Sigma_{\Delta}$. 
By \cite[Corollary 4.2.3]{Ba} an MPCP resolution induces a crepant resolution 
of the general anticanonical hypersurface of $X_{\Delta}$ in case $\Delta$ is four dimensional.
 By \cite[Theorem 4.9]{Fr}, given any $\Delta$-maximal fan $\Sigma$ for a four dimensional 
reflexive polytope $\Delta$, the general member of the anticanonical linear series 
of the toric variety $X_{\Sigma}$ is smooth.

\begin{proposition}\label{hn} Let $\Delta_1\subseteq \Delta_2$ be four dimensional 
reflexive polytopes. The general elements of the families $\mathcal F_{\Delta_1,\Delta_2^*}$  
and $\mathcal F_{\Delta_1,\Delta_1^*}$ have the same Hodge numbers.
\end{proposition}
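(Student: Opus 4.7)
The plan is to resolve a general member of each family to a smooth projective Calabi--Yau threefold inside a $\Delta_1$-maximal toric ambient space in the sense of \cite{Fr}, and to observe that the resulting smooth models have Hodge numbers depending only on the reflexive polytope $\Delta_1$. Since $\Delta_1 \subseteq \Delta_2$ is equivalent to $\Delta_2^*\subseteq \Delta_1^*$, the primitive ray generators of both normal fans $\Sigma_{\Delta_1}$ and $\Sigma_{\Delta_2}$ already lie in $\Delta_1^*\cap N$; thus for each $i=1,2$ one may choose a projective simplicial refinement $\Sigma^{(i)}$ of $\Sigma_{\Delta_i}$ whose rays are exactly the nonzero lattice points of $\Delta_1^*$. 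Such a $\Sigma^{(i)}$ is $\Delta_1$-maximal, and since its defining half-spaces are cut out by the vertices of $\Delta_1^*$, the anticanonical polytope of $X_{\Sigma^{(i)}}$ equals $\Delta_1$.

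I would next apply \cite[Theorem 4.9]{Fr} to deduce that a general anticanonical hypersurface $\widetilde Y_i\subset X_{\Sigma^{(i)}}$ is smooth, and Corollary \ref{crep} to conclude that the toric birational morphism $X_{\Sigma^{(i)}}\to X_{\Delta_i}$ induces a crepant resolution $\widetilde Y_i\to Y_i$, where $Y_1$ (resp.\ $Y_2$) is a general element of $\mathcal{F}_{\Delta_1,\Delta_1^*}$ (resp.\ $\mathcal{F}_{\Delta_1,\Delta_2^*}$). Each $\widetilde Y_i$ is therefore a smooth projective Calabi--Yau threefold whose classical Hodge numbers coincide with the stringy Hodge numbers of $Y_i$, since crepant resolutions of Gorenstein canonical Calabi--Yau varieties preserve stringy Hodge numbers. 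It is therefore enough to prove $h^{p,q}(\widetilde Y_1) = h^{p,q}(\widetilde Y_2)$.

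For this comparison I would either invoke Batyrev's combinatorial formula \cite[Corollary 4.5.1]{Ba}, whose output is intrinsic to the reflexive polytope $\Delta_1$ and independent of the chosen $\Delta_1$-maximal resolution, or observe directly that $\widetilde Y_1$ and $\widetilde Y_2$ are birational: they share the open subset $\{W=0\}\cap T_N$, where $W$ is a general Laurent polynomial with Newton polytope $\Delta_1$, and birational smooth projective Calabi--Yau manifolds have equal Hodge numbers. The main technical obstacle is producing projective simplicial refinements of both $\Sigma_{\Delta_1}$ and $\Sigma_{\Delta_2}$ with ray set equal to all nonzero lattice points of $\Delta_1^*$; this is made possible exactly by the inclusion $\Delta_2^*\subseteq\Delta_1^*$ provided by the nesting of reflexive polytopes, so the proposition ultimately rests on this single geometric input.
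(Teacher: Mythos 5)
Your proof follows essentially the same route as the paper's: resolve the general members of both families to smooth Calabi--Yau threefolds sitting in projective $\Delta_1$-maximal toric varieties, use Corollary \ref{crep} to see these resolutions are crepant (hence preserve stringy Hodge numbers), and conclude by birational invariance of Hodge numbers for smooth Calabi--Yau threefolds. The one step you assert without justification --- the existence of a projective simplicial refinement of $\Sigma_{\Delta_2}$ whose rays are exactly the nonzero lattice points of $\Delta_1^*$ --- is precisely what the paper supplies by first taking an MPCP resolution of $X_{\Delta_2}$ and then invoking \cite[Lemma 6.1]{Fr}.
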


\begin{proof} 
Let $\Sigma_2$ be the normal fan to $\Delta_2$ and 
$\Sigma_2'$ be the fan of an MPCP resolution of $X_{\Delta_2}$.   
By \cite[Lemma 6.1]{Fr}  there exists a projective
 $\Delta_1$-maximal fan $\tilde \Sigma_2$ which refines $\Sigma_2'$, and thus $\Sigma_2$.
Let  $\phi : X_{\tilde \Sigma_2} \to X_{\Sigma_2}$ be the corresponding toric morphism.
By \cite[Theorem 4.9]{Fr} the general member $\tilde Y$ of the anticanonical linear series 
of $X_{\tilde \Sigma_2}$ is smooth. Moreover, by Corollary \ref{crep}, $\tilde Y$ is a crepant resolution of 
the general member $Y$ of $\mathcal F(\Delta_1)$. 
Now let $\tilde Z$ be a crepant resolution of the general anticanonical 
hypersurface $Z$ of $X_{\Delta_1}$, induced by an MPCP resolution.
By \cite[Theorem 3.12]{Bstringy}  $\tilde Y$ and $Y$ have the same stringy Hodge numbers,
and the same holds for $\tilde Z$ and $Z$.  
Since $\tilde Y$ and $\tilde Z$ are birational smooth Calabi-Yau threefolds 
(since they are given by the same Newton polytope),
then they have the same Hodge numbers \cite[Theorem 1.1]{Bbetti}, which achieves the proof.
\end{proof}
 
\begin{corollary}\label{corhn} Let $(\Delta_1,\Delta_2)$ be a good pair of reflexive, four dimensional 
polytopes. The associated families $\mathcal F_{\Delta_1,\Delta_2^*}$ and $\mathcal F_{\Delta_2^*,\Delta_1}$ 
satisfy the topological mirror test if and only if the general elements of the families 
$\mathcal F_{\Delta_1,\Delta_1^*}$ and $\mathcal F_{\Delta_2,\Delta_2^*}$ have the same Hodge numbers.
\end{corollary}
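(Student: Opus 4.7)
The plan is to reduce the topological mirror test between $\mathcal F_{\Delta_1,\Delta_2^*}$ and $\mathcal F_{\Delta_2^*,\Delta_1}$ to the asserted Hodge-number coincidence between the two ``boundary'' Batyrev families $\mathcal F_{\Delta_1,\Delta_1^*}$ and $\mathcal F_{\Delta_2,\Delta_2^*}$, by applying Proposition \ref{hn} twice and combining with classical Batyrev mirror symmetry for the reflexive polytope $\Delta_2$.

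First I would apply Proposition \ref{hn} directly to the good pair $(\Delta_1,\Delta_2)$. Since both polytopes are four-dimensional and reflexive, this gives
\[
h^{p,q}_{st}(\mathcal F_{\Delta_1,\Delta_2^*}) \;=\; h^{p,q}_{st}(\mathcal F_{\Delta_1,\Delta_1^*}),\qquad 0\le p,q\le n,
\]
where one uses that the smooth crepant resolutions produced in the proof of Proposition \ref{hn} compute the stringy Hodge numbers. Next, the polar of a good pair of four-dimensional reflexive polytopes is again a good pair of four-dimensional reflexive polytopes (with $\Delta_2^*\subseteq\Delta_1^*$), so a second application of Proposition \ref{hn}, this time to $(\Delta_2^*,\Delta_1^*)$, yields
\[
h^{p,q}_{st}(\mathcal F_{\Delta_2^*,\Delta_1}) \;=\; h^{p,q}_{st}(\mathcal F_{\Delta_2^*,\Delta_2}).
\]
Finally, the reflexive polytope $\Delta_2$ puts us in the setting of Batyrev's original mirror theorem, giving
\[
h^{p,q}_{st}(\mathcal F_{\Delta_2,\Delta_2^*}) \;=\; h^{n-p,q}_{st}(\mathcal F_{\Delta_2^*,\Delta_2}).
\]

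Chaining these three identities, one sees after substitution that the topological mirror test
\[
h^{p,q}_{st}(\mathcal F_{\Delta_1,\Delta_2^*}) \;=\; h^{n-p,q}_{st}(\mathcal F_{\Delta_2^*,\Delta_1}),\qquad 0\le p,q\le n,
\]
is equivalent to
\[
h^{p,q}_{st}(\mathcal F_{\Delta_1,\Delta_1^*}) \;=\; h^{p,q}_{st}(\mathcal F_{\Delta_2,\Delta_2^*}),\qquad 0\le p,q\le n,
\]
which is the statement required. No genuine obstacle arises: the corollary is a purely formal bookkeeping consequence of Proposition \ref{hn} and Batyrev's theorem. The real content sits in Proposition \ref{hn} itself, whose proof relies on the existence of a simultaneous crepant resolution via a $\Delta_1$-maximal refinement and on the birational invariance of Hodge numbers of smooth Calabi-Yau threefolds; everything one needs for the corollary is then self-duality of the good-pair/reflexivity conditions under polarity.
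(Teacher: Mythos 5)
Your proof is correct and follows essentially the same route as the paper: the paper's one-line argument likewise applies Proposition \ref{hn} to both the pair $(\Delta_1,\Delta_2)$ and its polar $(\Delta_2^*,\Delta_1^*)$ and then invokes Batyrev's relation $h^{1,1}(\Delta_2)=h^{2,1}(\Delta_2^*)$ (his Corollary 4.5.1) for the reflexive polytope $\Delta_2$, exactly as in your chain of identities. You have merely made explicit the bookkeeping that the paper leaves implicit.
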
 

\begin{proof} It follows from Proposition \ref{hn} and \cite[Corollary 4.5.1]{Ba}, 
which gives that $h^{1,1}(\Delta_2)=h^{2,1}(\Delta_2^*)$ and 
viceversa. \end{proof}
 
\begin{example}\label{fred}
Consider the following four dimensional reflexive polytopes:
\begin{align*}
\Delta_1={\rm Conv}&((1,0,0,0),(0,1,0,0),(0,0,1,0),(0,0,0,1),(-1,-1,-1,-1)),\\
\Delta_2={\rm Conv}&((1,0,0,0),(0,1,0,0),(0,0,1,0),(0,0,0,1),\\&(-1,-1,-1,-1),(1,1,1,1),(0,0,0,-1)).
\end{align*}
By Proposition \ref{hn} and 
\cite[Corollary 4.5.1]{Ba} the Hodge numbers of $(\Delta_1,\Delta_2)$ are $(h^{1,1},h^{2,1})=(101,1)$ 
(observe that the variety associated to the normal fan to $\Delta_1$ is $\pp^4$), 
whereas the Hodge numbers of the dual pair are $(3,79)$.
Thus the dual families $\mathcal F_{\Delta_1,\Delta_2^*}$ and $\mathcal F_{\Delta_2^*,\Delta_1}$ 
do not satisfy the topological mirror test.
A direct computation shows the general member of $\mathcal F_{\Delta_1,\Delta_2^*}$ is not quasismooth.
This example appears in \cite[\S 3]{Fr2}.
\end{example}

\begin{example}\label{bvhodge}
Let $\mathcal F_{\Delta'_1,\Delta_2^*}$ be the family of hypersurfaces of $\pp(5,5,4,4,2)$ 
associated to the reflexive polytope $\Delta'_1\subset \Delta_2$ 
defined in Example  \ref{bv2}.
The Hodge numbers of $\Delta'_1$ and $\Delta_2$ are equal 
to $(h^{1,1},h^{2,1})=(15,39)$  by Batyrev formulas.
Thus by Corollary \ref{corhn} the dual families $\mathcal F_{\Delta'_1,\Delta_2^*}$ 
and $\mathcal F_{\Delta_2^*,\Delta_1'}$ satisfy the topological mirror test.
\end{example}

Combinatorially,  the duality of good pairs can be seen as included in the unified setting 
for mirror symmetry sketched in \cite[\S 7]{Bor}, except for the regularity 
condition described in \cite[Proposition 7.1.3]{Bor}.
We intend to explore such condition, in terms 
of combinatorial properties of polytopes, 
in a further work.

\vspace{0.3cm}

\noindent {\em Acknowledgments.}
 It is a pleasure to thank  Gilberto Bini, Samuel Boissi\`ere, 
 Antonio Laface and Alessandra Sarti for several useful discussions.

\bibliographystyle{plain}
\bibliography{Bibliobh}

\end{document}